\newtheorem{thm}{Theorem}[section]
\newtheorem{lem}[thm]{Lemma}
\newtheorem{ass}[thm]{Assumption}
\theoremstyle{definition}
\newtheorem{rem}[thm]{Remark}
\newtheorem{dfn}[thm]{Definition}
\newtheorem*{claim*}{Claim}
\numberwithin{equation}{section}
\title{\vspace{-3cm}\textbf{Scattering by the local perturbation of an open periodic waveguide in the half plane}}
\author{Takashi FURUYA}
\date{}
\begin{document}
\maketitle
\begin{abstract}
We consider the scattering problem of the local perturbation of an open periodic waveguide in the half plane. Recently in \cite{Kirsch and Lechleiter2}, a new radiation condition was introduced in order to solve the unperturbed case. In this paper, under the same radiation condition with \cite{Kirsch and Lechleiter2} (Definition 2.4) and an additional assumption (Assumption 1.1) we show the well-posedness of the perturbed scattering problem.
\end{abstract}
\section{Introduction}
Let $k>0$ be the wave number, and let $\mathbb{R}^2_{+}:=\mathbb{R}\times (0, \infty)$ be the upper half plane, and let $W:=\mathbb{R}\times (0, h)$ be the waveguide in $\mathbb{R}^2_{+}$. We denote by $\Gamma_a:=\mathbb{R}\times\{ a\}$ for $a>0$. Let $n \in L^{\infty}(\mathbb{R}^2_{+})$ be real value, $2\pi$-periodic with respect to $x_1$ (that is, $n(x_1+2\pi,x_2)=n(x_1,x_2 )$ for all $x=(x_1,x_2) \in \mathbb{R}^2_{+}$), and equal to one for $x_2>h$. We assume that there exists a constant $n_0>0$ such that $n \geq n_0$ in $\mathbb{R}^2_{+}$. Let $q \in L^{\infty}(\mathbb{R}^2_{+})$ be real value with the compact support in $W$. We denote by $Q:=\mathrm{supp}q$. In this paper, we consider the following scattering problem: For fixed $y \in \mathbb{R}^2_{+} \setminus \overline{W}$, determine the scattered field $u^{s} \in H^{1}_{loc}(\mathbb{R}^2_{+})$ such that
\begin{equation}
\Delta u^{s}+k^2(1+q)nu^{s}=-k^{2}qnu^{i}(\cdot, y) \ \mathrm{in} \ \mathbb{R}^2_{+}, \label{1.1}
\end{equation}
\begin{equation}
u^{s}=0 \ \mathrm{on} \ \Gamma_0, \label{1.2}
\end{equation}
Here, the incident field $u^{i}$ is given by $u^{i}(x,y)=G_n(x,y)$, where $G_n$ is the Dirichlet Green's function in the upper half plane $\mathbb{R}^2_{+}$ for $\Delta +k^2n$, that is,
\begin{equation}
G_n(x,y):=G(x,y)+\tilde{u}^{s}(x,y), \label{1.3}
\end{equation}
where $G(x,y):=\Phi_k(x,y)-\Phi_k(x,y^{*})$ is the Dirichlet Green's function in $\mathbb{R}^2_+$ for $\Delta +k^2$, and $y^{*}=(y_1, -y_2)$ is the reflected point of $y$ at $\mathbb{R}\times \{0\}$. Here, $\Phi_k(x,y)$ is the fundamental solution to Helmholtz equation in $\mathbb{R}^2$, that is, 
\begin{equation}
\Phi_k(x,y):= \displaystyle \frac{i}{4}H^{(1)}_0(k|x-y|), \ x \neq y. \label{1.4}
\end{equation}
$\tilde{u}^{s}$ is the scattered field of the unperturbed problem by the incident field $G(x,y)$, that is, $\tilde{u}^{s}$ vanishes for $x_2=0$ and solves
\begin{equation}
\Delta \tilde{u}^{s}+k^2n\tilde{u}^{s}=k^{2}(1-n)G(\cdot, y) \ \mathrm{in} \ \mathbb{R}^2_{+}. \label{1.5}
\end{equation}
If we impose a suitable radiation condition introduced by Kirsch and Lechleiter \cite{Kirsch and Lechleiter2}, the unperturbed solution $\tilde{u}^{s}$ is uniquely determined. Later, we will explain the exact definition of this radiation condition (see Definition 2.4). 
\par
In order to show the well-posedness of the perturbed scattering problem (\ref{1.1})--(\ref{1.2}), we make the following assumption.
\begin{ass}
We assume that $k^2$ is not the point spectrum of $\frac{1}{(1+q)n}\Delta$ in $H^{1}_{0}(\mathbb{R}^{2}_{+})$, that is, evey $v \in H^{1}(\mathbb{R}^{2}_{+})$ which satisfies
\begin{equation}
\Delta v+k^2(1+q)nv=0 \ \mathrm{in} \ \mathbb{R}^2_{+}, \label{1.6}
\end{equation}
\begin{equation}
v=0 \ \mathrm{on} \ \Gamma_0, \label{1.7}
\end{equation}
has to vanish for $x_2>0$.
\end{ass}
If we assume that $q$ and $n$ satisfy in addition that $\partial_2 \bigl((1+q)n\bigr) \geq 0$ in $W$, then $v$ which satisfies (\ref{1.6})--(\ref{1.7}) vanishes, that is, under this assumption all of $k^2$ is not the point spectrum of $\frac{1}{(1+q)n}\Delta$. We will prove it in Section 6. Our aim in this paper is to show the following theorem.
\begin{thm}
Let Assumptions 1.1 and 2.1 hold and let $k>0$ be regular in the sense of Definition 2.3 and let $f \in L^{2}(\mathbb{R}^2_{+})$ such that $\mathrm{supp}f=Q$. Then, there exists a unique solution $u \in H^{1}_{loc}(\mathbb{R}^2_{+})$ such that
\begin{equation}
\Delta u+k^2(1+q)nu=f \ \mathrm{in} \ \mathbb{R}^2_{+}, \label{1.8}
\end{equation}
\begin{equation}
u=0 \ \mathrm{on} \ \Gamma_0, \label{1.9}
\end{equation}
and $u$ satisfies the radiation condition in the sense of Definition 2.4.
\end{thm}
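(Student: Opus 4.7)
The strategy is a Lippmann--Schwinger reformulation that treats the perturbation $q$ as a compactly supported source for the unperturbed scattering problem. Let $S:L^{2}_{\mathrm{comp}}(\mathbb{R}^{2}_{+})\to H^{1}_{\mathrm{loc}}(\mathbb{R}^{2}_{+})$ denote the solution operator supplied by Kirsch--Lechleiter: for $g\in L^{2}(\mathbb{R}^{2}_{+})$ with compact support, $v=S(g)$ is the unique radiating (in the sense of Definition 2.4) solution of $\Delta v+k^{2}nv=g$ with $v=0$ on $\Gamma_{0}$; its well-posedness is covered by Assumption 2.1 and the regularity of $k$ (Definition 2.3). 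Rewriting (\ref{1.8}) as $\Delta u+k^{2}nu=f-k^{2}qnu$, and using that both $f$ and $qnu$ are supported in $Q$, one sees that $u\in H^{1}_{\mathrm{loc}}(\mathbb{R}^{2}_{+})$ satisfies (\ref{1.8})--(\ref{1.9}) together with the radiation condition if and only if
\begin{equation*}
u+k^{2}\,S(qnu)=S(f).
\end{equation*}

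Restricting to $Q$ turns this into a fixed-point equation on $L^{2}(Q)$. The operator $T:L^{2}(Q)\to L^{2}(Q)$ defined by $Tw:=k^{2}S(qnw)|_{Q}$ is compact: $qnw$ lies in $L^{2}$ with support in $Q$, interior regularity of $S$ gives $Tw\in H^{1}(Q)$, and $H^{1}(Q)\hookrightarrow L^{2}(Q)$ is compact. Hence $I+T$ is Fredholm of index zero on $L^{2}(Q)$, and solvability for every right-hand side reduces to injectivity. Once $u|_{Q}$ is found, the extension $u:=S(f)-k^{2}S(qnu)$ is the desired $H^{1}_{\mathrm{loc}}$ solution satisfying the radiation condition by construction.

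To establish injectivity, suppose $u\in H^{1}_{\mathrm{loc}}(\mathbb{R}^{2}_{+})$ satisfies the homogeneous problem with $u=0$ on $\Gamma_{0}$ and the radiation condition. Rewritten as $\Delta u+k^{2}nu=-k^{2}qnu$, the function $u$ is a radiating solution of the unperturbed problem driven by the compactly supported source $-k^{2}qnu\in L^{2}(Q)$. Apply Green's identity to $u$ and $\overline{u}$ on a truncation adapted to Definition 2.4 and let the truncation exhaust $\mathbb{R}^{2}_{+}$. Since $n$ and $q$ are real, the imaginary part of the resulting identity equals the outgoing energy flux, which under Definition 2.4 is non-negative and controls the propagating Bloch/Rayleigh components of $u$ at infinity; the absence of sources at infinity forces each propagating component to vanish. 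Only the exponentially decaying evanescent part of the decomposition of $u$ then remains, which upgrades $u$ to an element of $H^{1}(\mathbb{R}^{2}_{+})$. At this point Assumption 1.1 applies and gives $u\equiv 0$, which closes uniqueness and, through the Fredholm alternative, the proof.

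The main obstacle is the final reduction in the uniqueness argument: converting the abstract radiation condition of Definition 2.4 into a concrete flux identity on truncations that kills every propagating mode in the absence of a source, and then extracting global $H^{1}$ decay from the surviving evanescent modes. This is precisely where the compact support of the effective source $-k^{2}qnu$ and the Bloch--Floquet description of radiating solutions of the unperturbed problem have to be combined carefully; once that translation is in place, the rest of the argument is a standard Fredholm package.
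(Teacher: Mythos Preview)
Your proposal is correct and follows essentially the same strategy as the paper: a Lippmann--Schwinger reduction to a Fredholm equation on $L^{2}(Q)$ for existence, together with an energy-flux uniqueness argument that first kills the propagating guided-mode part $u^{(2)}$, then upgrades the remaining $u^{(1)}$ to $H^{1}_{0}(\mathbb{R}^{2}_{+})$ so that Assumption~1.1 applies. You have correctly flagged the flux step as the technical core; in the paper this is split into two separate lemmas (Lemma~3.1 for $u^{(2)}=0$, Lemma~4.1 for $u^{(1)}\in H^{1}$), and the first of these needs an integral representation for $u^{(1)}$ and a somewhat delicate choice of truncation domain $(-N,N)\times(0,N^{s})$ with $0<s<1/24$ to control the lateral and upper boundary contributions simultaneously.
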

Roughly speaking, the radiation condition of Definition 2.4  requires that we have a decomposition of the solution $u$ into $u^{(1)}$ which decays in the direction of $x_1$, and a finite combination $u^{(2)}$ of {\it propagative modes} which does not decay, but it exponentially decays in the direction of $x_2$.
\par
This paper is organized as follows. In Section 2, we briefly recall a radiation condition introduced in \cite{Kirsch and Lechleiter2}, and show that the solution of (\ref{2.1})--(\ref{2.2}) has an integral representation (\ref{2.18}). Under the radiation condition in the sense of Definition 2.4, we show the uniqueness of $u^{(2)}$ and $u^{(1)}$ in Section 3 and 4, respectively. In Section 5, we show the existence of $u$. In Section 6, we will give an example of $n$ and $q$ with respect to Assumption 1.1.
\section{A radiation condition}
In Section 2, we briefly recall a radiation condition introduced in \cite{Kirsch and Lechleiter2}. Let $f \in L^{2}(\mathbb{R}^2_{+})$ have the compact support in $W$.  First, we consider the following problem: Find $u \in H^{1}_{loc}(\mathbb{R}^2_{+})$ such that
\begin{equation}
\Delta u+k^2nu=f \ \mathrm{in} \ \mathbb{R}^2_{+}, \label{2.1}
\end{equation}
\begin{equation}
u=0 \ \mathrm{on} \ \Gamma_0. \label{2.2}
\end{equation}
(\ref{2.1}) is understood in the variational sense, that is,
\begin{equation}
\int_{\mathbb{R}^2_{+}} \bigl[ \nabla u \cdot \nabla \overline{\varphi}-k^2nu\overline{\varphi} \bigr]dx=-\int_W f \overline{\varphi}dx, \label{2.3}
\end{equation}
for all $\varphi \in H^{1}(\mathbb{R}^2_{+})$, with compact support. In such a problem, it is natural to impose the {\it upward propagating radiation condition}, that is, $u(\cdot, h) \in L^{\infty}(\mathbb{R})$ and 
\begin{equation}
u(x)=2\int_{\Gamma_h}u(y)\frac{\partial\Phi_k(x,y)}{\partial y_2} ds(y)=0,\ x_2>h. \label{2.4}
\end{equation}
However, even with this condition we can not expect the uniqueness of this problem. (see Example 2.3 of \cite{Kirsch and Lechleiter2}.) In order to introduce a {\it suitable radiation condition}, Kirsch and Lechleiter discussed limiting absorption solution of this problem, that is, the limit of the solution $u_{\epsilon}$ of $\Delta u_{\epsilon}+(k+i\epsilon)^2nu_{\epsilon}=f$ as $\epsilon \to 0$. For the details, we refer to \cite{Kirsch and Lechleiter1, Kirsch and Lechleiter2}.
\par
Let us prepare for the exact definition of the radiation condition. First we recall that the {\it Floquet Bloch transform} $T_{per} : L^{2}(\mathbb{R}) \to L^{2}\bigl( (0, 2\pi) \times (-1/2, 1/2) \bigr)$ is defined by 
\begin{equation}
T_{per}f(t, \alpha) = \tilde{f}_{\alpha}(t) := \sum_{m \in \mathbb{Z}}f(t+ 2\pi m)e^{-i\alpha(t+ 2\pi m)}, \label{2.5}
\end{equation}
for $(t, \alpha) \in (0, 2\pi) \times (-1/2, 1/2)$. The inverse transform is given by
\begin{equation}
T^{-1}_{per}g(t) = \int^{1/2}_{-1/2}g(t, \alpha)e^{i\alpha t}d\alpha,\ t \in \mathbb{R}. \label{2.6}
\end{equation}
By taking the Floquet Bloch transform with respect to $x_1$ in (\ref{2.1})--(\ref{2.2}), we have for $\alpha \in [-1/2, 1/2]$
\begin{equation}
\Delta \tilde{u}_{\alpha}+2i\alpha \frac{\partial \tilde{u}_{\alpha}}{\partial x_1} + (k^2n-\alpha^2)\tilde{u}_\alpha=\tilde{f}_{\alpha} \ \mathrm{in} \ (0,2\pi) \times (0, \infty). \label{2.7}
\end{equation}
\begin{equation}
\tilde{u}_\alpha=0 \ \mathrm{on} \ (0,2\pi)\times \{0 \}. \label{2.8}
\end{equation}
By taking the Floquet Bloch transform with respect to $x_1$ in (\ref{2.4}), $\tilde{u}_\alpha$ satisfies the {\it Rayleigh expansion} of the form 
\begin{equation}
\tilde{u}_{\alpha}(x)=\sum_{n \in \mathbb{Z}}u_{n}(\alpha)e^{inx_1+i\sqrt{k^2-(n+\alpha)^2}(x_2-h)}, \  x_2>h, \label{2.9}
\end{equation}
where $u_{n}(\alpha):=(2\pi)^{-1}\int_{0}^{2\pi}u_{\alpha}(x_1,h)e^{-inx_1}dx_1$ are the Fourier coefficients of $u_{\alpha}(\cdot,h)$, and $\sqrt{k^2-(n+\alpha)^2}=i\sqrt{(n+\alpha)^2-k^2}$ if $n+\alpha>k$. 
\par
We denote by $C_{R}:=(0,2\pi) \times (0, R)$ for $R \in (0,\infty]$, and  $H^{1}_{per}(C_R)$ the subspace of the $2 \pi$-periodic function in $H^{1}(C_R)$. We also denote by $H^{1}_{0,per}(C_{R}):=\{u \in H^{1}_{per}(C_{R}) : u = 0 \ \mathrm{on}\ (0,2\pi)\times \{0 \} \}$ that is equipped with $H^{1}(C_R)$ norm. The space $H^{1}_{0,per}(C_{R})$ has the inner product of the form
\begin{equation}
\langle u, v \rangle_{*}=\int_{C_h}\nabla u \cdot \nabla \overline{v}dx + 2\pi \sum_{n \in \mathbb{Z}}\sqrt{n^2+1}u_n\overline{v_n}, \label{2.10}
\end{equation}
where $u_n=(2\pi)^{-1}\int_{0}^{2\pi}u(x_1,R)e^{-inx_1}dx_1$.
The problem (\ref{2.7})--(\ref{2.9}) is equivalent to the following operator equation (see section 3 in \cite{Kirsch and Lechleiter2}),
\begin{equation}
\tilde{u}_{\alpha}-K_{\alpha}\tilde{u}_{\alpha}=\tilde{f}_{\alpha} \ \mathrm{in} \ H^{1}_{0,per}(C_h),\label{2.11}
\end{equation}
where the operator $K_{\alpha}:H^{1}_{0,per}(C_h) \to H^{1}_{0,per}(C_h)$ is defined by
\begin{eqnarray}
\langle K_{\alpha}u, v \rangle_{*}&=&-\int_{C_h}\left[i\alpha \biggl(u \frac{\partial \overline{v}}{\partial x_1} -\overline{v}\frac{\partial \overline{u}}{\partial x_1}
\biggr)+(\alpha^2-k^2n)u\overline{v}\right]dx 
\nonumber\\
&+& 2\pi i \sum_{|n+\alpha|\leq k}u_n\overline{v_n} \bigl( \sqrt{k^2-(n+\alpha)^2}-i\sqrt{n^2+1} \bigr)
\nonumber\\
&+& 2\pi \sum_{|n+\alpha|> k}u_n\overline{v_n} \bigl(\sqrt{n^2+1}- \sqrt{(n+\alpha)^2-k^2} \bigr).
 \label{2.12}
\end{eqnarray}
For several $\alpha \in (-1/2, 1/2]$, the uniqueness of this problem fails. We call $\alpha$ {\it exceptional values} if the operator $I-K_{\alpha}$ fails to be injective. For the difficulty of treatment of $\alpha$ such that $|\alpha+l|=k$ for some $l \in \mathbb{Z}$ in periodic scattering problem, we set $A_k:=\{\alpha \in (-1/2, 1/2]: \exists l \in \mathbb{Z} \ s.t. \ |\alpha+l|=k \}$, and make the following assumption:
\begin{ass}
For every $\alpha \in A_k$, $I-K_{\alpha}$ has to be injective.
\end{ass}
The following properties of exceptional values was shown in \cite{Kirsch and Lechleiter2}.

\begin{lem}
Let Assumption 2.1 hold. Then, there exists only finitely many exceptional values $\alpha \in (-1/2, 1/2]$. Furthermore, if $\alpha$ is an exceptional value, then so is $-\alpha$. Therefore, the set of exceptional values can be described by $\{\alpha_j:j\in J \}$ where some $J \subset \mathbb{Z}$ is finite and $\alpha_{-j}=-\alpha_j$ for $j \in J$. For each exceptional value $\alpha_j$ we define 
\begin{equation}
X_j:=\left\{ \phi \in H^{1}_{loc}(\mathbb{R}^2_+):\begin{array}{cc}
      \Delta \phi+2i\alpha_j\frac{\partial \phi}{\partial x_1}+(k^2n-\alpha^2)\phi=0 \ \mathrm{in} \ \mathbb{R}^2_+, \\
      \phi=0 \ \mathrm{for} \ x_2=0, \ \ \ \phi \ \mathrm{is} \ 2\pi \mathrm{-periodic} \ \mathrm{for}\ x_1, \\
      \phi \ \mathrm{satisfies \ the \ Rayleigh\ expansion}\ (\ref{2.9})
    \end{array}
\right\} \nonumber
\end{equation}
Then, $X_j$ are finite dimensional. We set $m_j=\mathrm{dim}X_j$. Furthermore, $\phi \in X_j$ is evanescent, that is, there exists $c>0$ and $\delta>0$ such that $|\phi(x)|, \ |\nabla \phi(x)|\leq ce^{-\delta |x_2|}$ for all $x\in \mathbb{R}^2_+$.
\end{lem}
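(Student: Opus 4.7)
My plan is to treat the structural assertions (discreteness of the exceptional values, finite-dimensionality of $X_j$, and the symmetry $\alpha_j \leftrightarrow -\alpha_j$) via the Fredholm theory of the operator equation (\ref{2.11}), and then to establish the evanescence of $X_j$-functions by an energy/flux identity on the period cell.

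For the structural part, I would first verify that $K_\alpha : H^{1}_{0,per}(C_h) \to H^{1}_{0,per}(C_h)$ depends holomorphically on $\alpha \in (-1/2, 1/2] \setminus A_k$ and that $I - K_\alpha$ is Fredholm of index zero. The volume integrals in (\ref{2.12}) factor through the compact embedding $H^{1}(C_h) \hookrightarrow L^{2}(C_h)$, while the boundary series on $\Gamma_h$ differs from the coercive piece $2\pi\sum\sqrt{n^2+1}\,u_n\overline{v_n}$ built into (\ref{2.10}) by a bounded multiplier whose symbol $1 - \sqrt{(n+\alpha)^2-k^2}/\sqrt{n^2+1}$ tends to $0$ as $|n|\to\infty$, hence is compact modulo the finite-rank contribution of propagating indices. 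Invertibility of $I - K_\alpha$ at one non-exceptional $\alpha$, available from the limiting absorption construction of \cite{Kirsch and Lechleiter2}, combined with the analytic Fredholm theorem, forces the exceptional values to form a discrete subset of $(-1/2,1/2]\setminus A_k$; Assumption 2.1 excludes $A_k$, and compactness of $[-1/2,1/2]$ yields finiteness. Each $X_j = \ker(I - K_{\alpha_j})$ is then automatically finite-dimensional. The symmetry $\alpha_j \mapsto -\alpha_j$ follows from conjugating (\ref{2.7}): since $n$ and $k$ are real, $\phi \in X_j$ implies $\overline{\phi} \in X_{-j}$, and (\ref{2.9}) is preserved because $\sqrt{k^2 - (n+\alpha_j)^2} = \sqrt{k^2 - (-n-\alpha_j)^2}$.

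The heart of the argument is evanescence. Fix $\phi \in X_j$ and $R > h$. Multiply the Bloch equation by $\overline{\phi}$, integrate over $C_R$, and integrate by parts: periodicity cancels the lateral boundary terms, the Dirichlet condition on $\Gamma_0$ annihilates the bottom term, the $-\int|\nabla\phi|^2$ and $\int(k^2n-\alpha_j^2)|\phi|^2$ contributions are real, and
\[
\mathrm{Im}\Bigl[2i\alpha_j\int_{C_R}\overline{\phi}\,\partial_{x_1}\phi\,dx\Bigr] = \alpha_j\int_{C_R}\partial_{x_1}|\phi|^2\,dx = 0
\]
again by periodicity. What survives is the flux identity $\mathrm{Im}\int_0^{2\pi}\overline{\phi(x_1,R)}\,\partial_{x_2}\phi(x_1,R)\,dx_1 = 0$. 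Substituting (\ref{2.9}) and applying Parseval, the evanescent modes contribute only to the real part, while the propagating modes $|n+\alpha_j|<k$ contribute $2\pi \sum \sqrt{k^2 - (n+\alpha_j)^2}\,|\phi_n|^2$ to the imaginary part. Since $\alpha_j \notin A_k$, every such square root is strictly positive, forcing $\phi_n = 0$ for every propagating index. Hence (\ref{2.9}) reduces to a purely evanescent sum, and taking $\delta := \min_{|n+\alpha_j|>k}\sqrt{(n+\alpha_j)^2 - k^2} > 0$ yields $|\phi(x)|, |\nabla\phi(x)| \le c\,e^{-\delta x_2}$ for $x_2 > h$; on the strip $0 \le x_2 \le h$, interior elliptic estimates applied to $\phi$ (bounded together with its gradient in $L^2_{loc}$ by periodicity of the Bloch equation) let us absorb this range into a larger constant $c$.

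The main obstacle is extracting the compactness/Fredholm structure of $K_\alpha$ cleanly from the three heterogeneous pieces of (\ref{2.12}); once that is in place, analytic Fredholm theory and the flux identity are essentially routine. Since the lemma is attributed to \cite{Kirsch and Lechleiter2}, the proof amounts to reproducing their derivation, and I would emphasize the flux argument above since the subsequent sections depend only on the evanescence conclusion.
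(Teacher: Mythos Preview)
The paper does not prove this lemma at all: it is stated as a quotation of results from \cite{Kirsch and Lechleiter2}, with the sentence ``The following properties of exceptional values was shown in \cite{Kirsch and Lechleiter2}'' immediately preceding the statement. There is therefore no in-paper proof to compare your proposal against; you have supplied considerably more than the paper itself does.

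That said, your sketch is a faithful reconstruction of the argument one finds in \cite{Kirsch and Lechleiter2}. The analytic Fredholm approach for finiteness and the conjugation map $\phi\mapsto\overline{\phi}$ for the symmetry $\alpha_j\leftrightarrow-\alpha_j$ are exactly what is used there, and your flux identity on the period cell (imaginary part of $\int_0^{2\pi}\overline{\phi}\,\partial_{x_2}\phi$ at height $R$) killing the propagating Rayleigh coefficients is the standard route to evanescence. One minor point worth tightening: the pointwise bound $|\nabla\phi(x)|\le c$ on the strip $0<x_2\le h$ does not follow from $H^1_{loc}$ alone in two dimensions, and with merely $n\in L^\infty$ you only get $\phi\in H^2_{loc}\hookrightarrow C^0$, not $\nabla\phi\in L^\infty$; the cited reference handles this via additional smoothness or by interpreting the gradient bound in an $L^2$ sense on horizontal slices. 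Since the present paper only ever uses the evanescence in integrated form (e.g.\ in (\ref{3.12})--(\ref{3.13}) and (\ref{3.33})), this does not affect any downstream argument.
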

Next, we consider the following eigenvalue problem in $X_j$: Determine $d \in \mathbb{R}$ and $\phi \in X_j$ such that
\begin{equation}
\int_{C_{\infty}}\left[-i\frac{\partial \phi}{\partial x_1}+\alpha_j \phi \right] \overline{\psi} dx= dk\int_{C_{\infty}}n\phi \overline{\psi}dx,\label{2.13}
\end{equation}
for all $\psi \in X_j$. We denote by the eigenvalues $d_{l,j}$ and eigenfunction $\phi_{l,j}$ of this problem, that is,
\begin{equation}
\int_{C_{\infty}}\left[-i\frac{\partial \phi_{l,j}}{\partial x_1}+\alpha_j \phi_{l,j} \right] \overline{\psi} dx= d_{l,j}k\int_{C_{\infty}}n\phi_{l,j} \overline{\psi}dx,\label{2.14}
\end{equation}
for every $l=1,...,m_j$ and $j \in J$. We normalize the eigenfunction $\{\phi_{l,j}: l=1,...,m_j \}$ such that
\begin{equation}
k\int_{C_{\infty}}n\phi_{l,j} \overline{\phi_{l',j}}dx=\delta_{l,l'},\label{2.15}
\end{equation}
for all $l, l'$. We will assume that the wave number $k>0$ is {\it regular} in the following sense.
\begin{dfn}
$k>0$ is {\it regular} if $d_{l,j}\neq 0$ for all $l=1,...m_j$ and $j \in J$.
\end{dfn}
Now we are ready to define the radiation condition. 
\begin{dfn}
Let Assumptions 2.1 hold, and let $k>0$ be regular in the sense of Definition 2.3. We set
\begin{equation}
\psi^{\pm}(x_1):=\frac{1}{2} \left[ 1\pm \frac{2}{\pi}\int_{0}^{x_1/2}\frac{sint}{t}dt \right] , \ x_1 \in \mathbb{R}.\label{2.16}
\end{equation}
Then, $u \in H^{1}_{loc}(\mathbb{R}^2_{+})$ satisfies the {\it radiation condition} if $u$ satisfies the upward propagating radiation condition (\ref{2.4}), and has a decomposition in the form $u=u^{(1)}+u^{(2)}$ where $u^{(1)} \bigl|_{\mathbb{R} \times (0,R)} \in H^{1}(\mathbb{R} \times (0,R))$ for all $R>0$, and $u^{(2)}\in L^{\infty}(\mathbb{R}^{2}_{+})$ has the following form
\begin{equation}
u^{(2)}(x)=\psi^{+}(x_1)\sum_{j \in J} \sum_{d_{l,j}>0}a_{l,j}\phi_{l,j}(x)+\psi^{-}(x_1)\sum_{j \in J} \sum_{d_{l,j}<0}a_{l,j}\phi_{l,j}(x) \label{2.17}
\end{equation}
where some $a_{l,j} \in \mathbb{C}$, and $\{d_{l,j},\phi_{l,j}: l=1,...,m_j \}$ are normalized eigenvalues and eigenfunctions of the problem (\ref{2.8}). 
\end{dfn}

\begin{rem}
It is obvious that we can replace $\psi^{+}$ by any smooth functions $\tilde{\psi}^{\pm}$ with $\tilde{\psi}^{+}(x_1)=1+\mathcal{O}(1/x_1)$ as $x_1\to \infty$ and $\tilde{\psi}^{+}(x_1)=\mathcal{O}(1/x_1)$ as $x_1\to -\infty$ and $\frac{d}{dx_1}\tilde{\psi}^{+}(x_1)\to 0$ as $|x_1|\to \infty$ (and analogously for $\psi^{-}$).
\end{rem}

The following was shown in Theorems 2.2, 6.6, and 6.8 of \cite{Kirsch and Lechleiter2}.
\begin{thm}
Let Assumptions 2.1 hold and let $k>0$ be regular in the sense of Definition 2.3. For every $f \in L^{2}(\mathbb{R}^2_{+})$ with the compact support in $W$, there exists a unique solution $u_{k+i \epsilon} \in H^{1}(\mathbb{R}^{2}_{+})$ of the problem (\ref{2.1})--(\ref{2.2}) replacing $k$ by $k+i\epsilon$. Furthermore, $u_{k+i \epsilon}$ converge as $\epsilon \to +0$ in $H^{1}_{loc}(\mathbb{R}^{2}_{+})$ to some $u \in H^{1}_{loc}(\mathbb{R}^{2}_{+})$ which satisfy (\ref{2.1})--(\ref{2.2}) and the radiation condition in the sense of Definition 2.4. Furthermore, the solution $u$ of this problem is uniquely determined.
\end{thm}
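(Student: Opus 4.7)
The theorem is stated as a compilation of Theorems 2.2, 6.6 and 6.8 of the cited Kirsch--Lechleiter work, so the ``proof'' within the present paper is essentially a reference to that source. Nevertheless, the plan that an actual proof follows decomposes into three parts: (i) unique solvability of the complex-$k$ problem in $H^{1}(\mathbb{R}^{2}_{+})$ for each $\epsilon>0$; (ii) passage to the limit $\epsilon\to 0^{+}$ in $H^{1}_{loc}(\mathbb{R}^{2}_{+})$ together with identification of the decomposition (\ref{2.17}); and (iii) uniqueness of any radiating solution of (\ref{2.1})--(\ref{2.2}).

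For (i), replacing $k$ by $k+i\epsilon$ makes the problem absorptive, so any solution above $\Gamma_{h}$ decays exponentially and one may work globally in $H^{1}(\mathbb{R}^{2}_{+})$. The standard route is to reduce to a variational problem on the strip $\mathbb{R}\times(0,h)$ closed by the exact Dirichlet-to-Neumann map on $\Gamma_{h}$ realized through the Rayleigh expansion. Since $\mathrm{Im}\sqrt{(k+i\epsilon)^{2}-\xi^{2}}>0$ for every Fourier frequency $\xi$, the DtN contribution has strictly positive imaginary part; combined with a coercivity estimate on the lower-order interior term this yields unique solvability by the Fredholm alternative, injectivity following directly from dissipativity.

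For (ii), the Floquet--Bloch transform in $x_{1}$ turns the problem into the one-parameter family (\ref{2.7})--(\ref{2.9}), equivalent to $\tilde{u}_{\alpha,k+i\epsilon}-K_{\alpha,k+i\epsilon}\tilde{u}_{\alpha,k+i\epsilon}=\tilde{f}_{\alpha}$. The operator $K_{\alpha,k+i\epsilon}$ depends analytically on $(\alpha,\epsilon)$, and away from the finite exceptional set $\{\alpha_{j}\}$ of Lemma 2.2 the inverse $(I-K_{\alpha,k+i\epsilon})^{-1}$ exists uniformly for small $\epsilon\ge 0$, giving a smooth limit in $\alpha$. Near each $\alpha_{j}$ one expands $\tilde{u}_{\alpha}$ in the eigenfunctions $\phi_{l,j}$ of (\ref{2.14}); standard spectral perturbation of $I-K_{\alpha}$ at $\alpha_{j}$ produces simple poles of the shape $(\alpha-\alpha_{j}\pm i\epsilon d_{l,j})^{-1}$, where the regularity assumption $d_{l,j}\neq 0$ is precisely what makes these poles simple and determines, through the sign of $d_{l,j}$, the side of the real axis on which they sit. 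Inverting the Floquet--Bloch transform and applying the Sokhotski--Plemelj identity, the principal-value integral contributes a term that is locally in $H^{1}$ and decays in $x_{1}$ (this is $u^{(1)}$), while the delta part produces the finite sum of propagative modes $\phi_{l,j}$ with Heaviside-type cutoffs in $x_{1}$; the explicit representation of these cutoffs via $\int_{0}^{x_{1}/2}\sin t/t\,dt$ is exactly the Fourier integral of the Heaviside function, which is the origin of the specific form of $\psi^{\pm}$ in (\ref{2.16}).

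For (iii), uniqueness reduces by linearity to showing that any $u=u^{(1)}+u^{(2)}$ satisfying the homogeneous equation, the boundary condition, the upward radiation condition and the decomposition (\ref{2.17}) must vanish. One pairs $u$ with each propagative mode $\phi_{l,j}$ on a large rectangle $(-A,A)\times(0,R)$ and applies Green's identity; the $H^{1}$-decay of $u^{(1)}$ in $x_{1}$ together with the orthonormality (\ref{2.15}) forces the coefficients $a_{l,j}$ to vanish as $A\to\infty$, so $u=u^{(1)}\in H^{1}(\mathbb{R}^{2}_{+})$, and then the upward radiation condition combined with Assumption 2.1 gives $u\equiv 0$. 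The main obstacle throughout is the residue analysis at the exceptional values: showing that the singular part of $\tilde{u}_{\alpha,k+i\epsilon}$ is indeed a sum of simple poles with the claimed residues, matching the sign of $d_{l,j}$ to the correct $\psi^{\pm}$ weight, and verifying that the principal-value remainder really sits in $H^{1}(\mathbb{R}\times(0,R))$ for every $R>0$.
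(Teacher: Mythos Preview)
Your proposal is correct and matches the paper's treatment: the paper does not prove Theorem~2.6 at all but simply cites Theorems~2.2, 6.6 and 6.8 of \cite{Kirsch and Lechleiter2}, exactly as you note in your opening sentence. The sketch you give of the underlying argument --- absorption for $\epsilon>0$, Floquet--Bloch reduction to the quasi-periodic resolvent $(I-K_{\alpha})^{-1}$, simple-pole residue analysis at the exceptional $\alpha_{j}$ producing the $\psi^{\pm}$-weighted propagative part (cf.\ the form (\ref{2.25}) quoted later in the proof of Lemma~2.7), and a Green-identity uniqueness argument --- is an accurate outline of the cited Kirsch--Lechleiter proof.
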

We have recalled the radiation condition and its properties. Finally in this section, we will show the following integral representation.
\begin{lem}
Let $f \in L^2(\mathbb{R}^2_+)$ have a compact support in $W$, and let $u$ be a solution of (\ref{2.1})--(\ref{2.2}) which satisfying the radiation condition in the sense of Definition 2.4. Then, $u$ has an integral representation of the form
\begin{equation}
u(x)=k^2\int_{W} (n(y)-1) u(y)G(x,y)dy-\int_{W} f(y)G(x,y)dy, \ \ x \in \mathbb{R}^2_+ \label{2.18} 
\end{equation}
\end{lem}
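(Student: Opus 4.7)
My plan is to apply Green's second identity to the pair $(u,G(x,\cdot))$ on the truncated rectangle $D_{A,R}:=(-A,A)\times(0,R)$ (minus a small disc around $x$) and then let $A,R\to\infty$, using the decomposition $u=u^{(1)}+u^{(2)}$ from the radiation condition together with the far-field decay of $G$ to control the boundary contributions.

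Fix $x\in\mathbb{R}^2_+$, choose $R>\max(h,x_2)$, and for $A>|x_1|$ large with $\mathrm{supp}\,f\subset D_{A,R}$, apply Green's second identity on $D_{A,R}\setminus\overline{B_\epsilon(x)}$. Using $(\Delta_y+k^2)G(x,\cdot)=-\delta_x$, $G(x,\cdot)|_{\Gamma_0}=0$, $(\Delta+k^2)u=f-k^2(n-1)u$, and $u|_{\Gamma_0}=0$, the standard calculation (with $\epsilon\to 0$ extracting $u(x)$ from the fundamental-solution singularity) yields
\[
u(x)=-\!\int_{D_{A,R}}\!\!fG\,dy+k^2\!\int_{D_{A,R}}\!\!(n-1)uG\,dy+I^{\mathrm{top}}_R(A)+I^{\mathrm{side}}_R(A),
\]
the $\Gamma_0$-contribution vanishing since both $u$ and $G$ vanish there; here $I^{\mathrm{top}}_R(A)$ and $I^{\mathrm{side}}_R(A)$ are the boundary integrals $\int[G\partial_\nu u-u\partial_\nu G]$ on $\Gamma_R\cap D_{A,R}$ and on $\{y_1=\pm A\}\times(0,R)$.

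\emph{Lateral boundary.} Write $u=u^{(1)}+u^{(2)}$. Since $u^{(1)}\in H^1(\mathbb{R}\times(0,R))$, there is a subsequence $A_n\to\infty$ along which $u^{(1)}(\pm A_n,\cdot)$ and $\partial_{y_1}u^{(1)}(\pm A_n,\cdot)$ tend to zero in $L^2(0,R)$; Cauchy--Schwarz against the $L^2(0,R)$-bounded traces of $G(x,\cdot)$ and $\partial_{y_1}G(x,\cdot)$ kills this part. For $u^{(2)}$, the evanescence $|\phi_{l,j}(y)|+|\nabla\phi_{l,j}(y)|\le Ce^{-\delta y_2}$ from Lemma 2.2, combined with the Hankel-asymptotic $|G(x,y)|+|\partial_{y_1}G(x,y)|=\mathcal{O}(|y|^{-1/2})$ as $|y|\to\infty$ and the bounds $\psi^{\pm}\in L^{\infty}(\mathbb{R})$, $(\psi^{\pm})'(y_1)\to 0$ at infinity, gives an $\mathcal{O}(A_n^{-1/2})$ bound on the $u^{(2)}$-side integral. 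Thus $I^{\mathrm{side}}_R(A_n)\to 0$, and the same decay lets us pass to the limit in the volume integral, defining $\int_W(n-1)uG\,dy$ as the symmetric principal value $\lim_{n\to\infty}\int_{W\cap\{|y_1|<A_n\}}(n-1)uG\,dy$.

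\emph{Top boundary.} For $R>\max(h,x_2)$, both $u$ and $G(x,\cdot)$ are solutions of $(\Delta+k^2)=0$ above $\Gamma_R$ satisfying the upward propagating radiation condition (2.4). Taking the Fourier transform in $y_1$, each admits a representation $\hat u(\xi,y_2)=U(\xi)e^{i\gamma(\xi)(y_2-R)}$, $\hat G(\xi,y_2)=V(\xi)e^{i\gamma(\xi)(y_2-R)}$ for $y_2\ge R$, where $\gamma(\xi)=\sqrt{k^2-\xi^2}$ is even in $\xi$. A Plancherel/principal-value computation converts $I^{\mathrm{top}}_R(\infty):=\lim_{n\to\infty}I^{\mathrm{top}}_R(A_n)$ into
\[
\frac{i}{2\pi}\int_{\mathbb{R}}\gamma(\xi)\bigl[V(\xi)U(-\xi)-U(\xi)V(-\xi)\bigr]d\xi,
\]
whose integrand is antisymmetric under $\xi\mapsto-\xi$, hence the integral vanishes. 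Passing to the limit in the displayed identity yields (2.18).

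\emph{Main obstacle.} The delicate step is the lateral boundary: since $u^{(2)}$ does not decay along $x_1$, vanishing of $I^{\mathrm{side}}_R$ rests entirely on the $|y|^{-1/2}$ decay of $G$ together with the $y_2$-evanescence of each $\phi_{l,j}$, and one must simultaneously use these estimates plus the $H^1$-regularity of $u^{(1)}$ to justify that the volume integral in (2.18) is well defined as the symmetric principal value described above.
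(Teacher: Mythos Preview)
Your approach is genuinely different from the paper's. The paper never works directly with the real-$k$ solution on the boundary of a rectangle; instead it passes to the absorbed problem $\Delta u_\epsilon+(k+i\epsilon)^2 n u_\epsilon=f$, for which $u_\epsilon\in H^1(\mathbb{R}^2_+)$ globally, so Green's representation on a large half-disc gives (2.22) with \emph{no} boundary terms surviving, and then lets $\epsilon\to 0$ using Theorem~2.6 together with the uniform estimate $|G_\epsilon(x,y)|\le Cx_2y_2/(1+|x-y|^{3/2})$ and a logarithmic bound on $u^{(2)}_\epsilon$ to justify dominated convergence. Your direct route avoids invoking the limiting-absorption convergence, which is attractive, but it forces you to control boundary integrals of a function that is \emph{not} in $H^1(\mathbb{R}^2_+)$.

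There is a real gap in your top-boundary step. The Plancherel/antisymmetry computation you sketch requires pairing $\widehat{u(\cdot,R)}$ against $\widehat{G(x,\cdot,R)}$, but $u(\cdot,R)\notin L^2(\Gamma_R)$: the propagating part $u^{(2)}(\cdot,R)$ is a bounded combination of (quasi-)periodic guided modes, so $U(\xi)=\widehat{u(\cdot,R)}(\xi)$ is only a tempered distribution supported on a discrete set plus an $L^2$ piece, and neither the product $\gamma(\xi)V(\xi)U(-\xi)$ nor the Parseval identity you use is defined without further argument. Formally the antisymmetry is correct, but making it rigorous would require either splitting $u=u^{(1)}+u^{(2)}$ on $\Gamma_R$ and handling the $u^{(2)}$ contribution by a separate (non-Fourier) computation, or using the sharper Dirichlet-Green decay $|G(x,y)|,|\nabla_yG(x,y)|\le Cx_2y_2/(1+|x-y|^{3/2})$ (equation (3.11) in the paper) to show $G(x,\cdot)|_{\Gamma_R}\in L^1$ and then carefully justify the distributional pairing. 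As written, the ``Plancherel/principal-value computation'' is the missing idea, not the lateral boundary you flag.

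A secondary point: you only invoke the Hankel decay $|G(x,y)|=\mathcal{O}(|y|^{-1/2})$. With that alone, $\int_W(n-1)u^{(2)}G$ is \emph{not} absolutely convergent (bounded integrand times $|y_1|^{-1/2}$ is not in $L^1(\mathbb{R})$), and you give no oscillation argument to make your ``symmetric principal value'' converge. In fact the Dirichlet reflection produces the better $|x-y|^{-3/2}$ decay cited above, which makes the volume integral in (2.18) an ordinary absolutely convergent integral---no principal value is needed. You should use this estimate throughout; it also strengthens your lateral-boundary bound.
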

\begin{proof}[Proof of Lemma 2.7]
Let $\epsilon >0$ be small enough and let $u_{\epsilon} \in H^{1}(\mathbb{R}^{2}_{+})$ be a solution of the problem (\ref{2.1})--(\ref{2.2}) replacing $k$ by $k+i\epsilon$, that is, $u_{\epsilon}$ satisfies
\begin{equation}
\Delta u_{\epsilon}+(k+i\epsilon)^2nu_{\epsilon}=f \ \mathrm{in} \ \mathbb{R}^2_{+}, \label{2.19}
\end{equation}
\begin{equation}
u_{\epsilon}=0 \ \mathrm{on} \ \Gamma_0. \label{2.20}
\end{equation}
Let $G_{\epsilon}(x,y)$ be the Dirichlet Green's function in the upper half plane $\mathbb{R}^2_{+}$ for $\Delta +(k+i\epsilon)^2$. Let $x \in \mathbb{R}^{2}_+$ be always fixed such that $x_2>R$. Let $r>0$ be large enough such that $x \in B_r(0)$ where $B_r(0) \subset \mathbb{R}^{2}$ be a open ball with center $0$ and radius $r>0$. By Green's representation theorem in $B_r(0)\cap \mathbb{R}^{2}_+$ we have
\begin{eqnarray}
u_{\epsilon}(x)&=&\int_{\partial B_r(0) \cap \mathbb{R}^{2}_+}\bigl[\frac{\partial u_{\epsilon}}{\partial \nu}(y)G_{\epsilon}(x,y)-u_{\epsilon}(y)\frac{\partial G_{\epsilon}}{\partial \nu}(x,y)\bigr]ds(y)
\nonumber\\
&-&\int_{B_r(0)\cap \mathbb{R}^{2}_+} \bigl[\Delta u_{\epsilon}(y)+(k+i\epsilon)^2u_{\epsilon}(y)\bigr]G_{\epsilon}(x,y)dy
\nonumber\\
&=&\int_{\partial B_r(0) \cap \mathbb{R}^{2}_+}\bigl[\frac{\partial u_{\epsilon}}{\partial \nu}(y)G_{\epsilon}(x,y)-u_{\epsilon}(y)\frac{\partial G_{\epsilon}}{\partial \nu}(x,y)\bigr]ds(y)
\nonumber\\
&+&(k+i\epsilon)^2\int_{B_r(0)\cap \mathbb{R}^{2}_+} (n(y)-1) u_{\epsilon}(y)G_{\epsilon}(x,y)dy
\nonumber\\
&-&
\int_{B_r(0)\cap \mathbb{R}^{2}_+} f(y)G_{\epsilon}(x,y)dy.
\label{2.21} 
\end{eqnarray}
Since $u_{\epsilon} \in H^{1}(\mathbb{R}^{2}_{+})$, the first term of the right hand side converges to zero as $r \to \infty$. Therefore, as $r \to \infty$ we have for $x \in \mathbb{R}^{2}_+$
\begin{equation}
u_{\epsilon}(x)=(k+i\epsilon)^2\int_{W} (n(y)-1) u_{\epsilon}(y)G_{\epsilon}(x,y)dy-\int_{W} f(y)G_{\epsilon}(x,y)dy.\label{2.22} 
\end{equation}
We will show that (\ref{2.22}) converges as $\epsilon \to 0$ to
\begin{equation}
u(x)=k^2\int_{W} (n(y)-1) u(y)G(x,y)dy-\int_{W} f(y)G(x,y)dy.\label{2.23} 
\end{equation}
Indeed, by the argument in (3.8) and (3.9) of \cite{Chandler and Christopher}, $G_{\epsilon}(x,y)$ is of the estimation
\begin{equation}
|G_{\epsilon}(x,y)| \leq C \frac{x_2 y_2}{1+|x-y|^{3/2}}, \ |x-y|>1, \label{2.24} 
\end{equation}
where above $C$ is independent of $\epsilon>0$. Then, by Lebesgue dominated convergence theorem we have the second integral in (\ref{2.22}) converges as $\epsilon \to 0$ to one in (\ref{2.23}). So, we will consider the convergence of the first integral in (\ref{2.22}). 
\par
By the beginning of the proof of Theorem 6.6 in \cite{Kirsch and Lechleiter2}, $u_{\epsilon}$ can be of the form $u_{\epsilon}=u^{(1)}_{\epsilon}+u^{(2)}_{\epsilon}$ where $u^{(1)}_{\epsilon}$ converges to $u^{(1)}$ in $H^{1}(W)$, and $u^{(2)}_{\epsilon}$ is of the form for $x \in W$
\begin{equation}
u^{(2)}_{\epsilon}(x)=\sum_{j \in J} \sum_{l=1}^{m_j}y_{l,j}\int ^{1/2}_{-1/2}\frac{e^{i\alpha x_1}}{i\epsilon-d_{l,j}\alpha}d\alpha \ \phi_{l,j}(x), \label{2.25} 
\end{equation}
which converges pointwise to $u^{(2)}(x)$. Here, $y_{l,j} \in \mathbb{C}$ is some constant. From the convergence of $u^{(1)}_{\epsilon}$ in $H^{1}(W)$ we obtain that $\int_{W} (n(y)-1) u^{(1)}_{\epsilon}(y)G_{\epsilon}(x,y)dy$ converges $\int_{W} (n(y)-1) u^{(1)}(y)G(x,y)dy$ as $\epsilon \to 0$.
\par
By the argument of (b) in Lemma 6.1 of \cite{Kirsch and Lechleiter2} we have 
\begin{eqnarray}
\lefteqn{\psi_{l,j,\epsilon}(x_1):=\int ^{1/2}_{-1/2}\frac{e^{i\alpha x_1}}{i\epsilon-d_{l,j}\alpha}d\alpha}
\nonumber\\
&=&
-\frac{i}{|d_{l,j}|}\int ^{|d_{l,j}|/(2\epsilon)}_{-|d_{l,j}|/(2\epsilon)}\frac{\mathrm{cos}(t \epsilon x_1/|d_{l,j}|)}{1+t^{2}}dt-2id_{l,j}\int ^{x_1/2}_{0}\frac{t\mathrm{sin}t}{x^2_1\epsilon^2+d_{l,j}^2t^{2}}dt, \ \ \ \ \ \ \ \ \ \ \   \label{2.26} 
\end{eqnarray}
which implies that for all $x_1 \in \mathbb{R}$
\begin{eqnarray}
\lefteqn{\bigl|\psi_{l,j,\epsilon}(x_1)\bigr|\leq C\biggl(\int^{\infty}_{-\infty}\frac{dt}{1+t^2}+\int^{|x_1|/2}_{0}\biggl|\frac{\mathrm{sin}t}{t}\biggr|dt\biggr)}
\nonumber\\
&\leq&
C\biggl(\int^{\infty}_{-\infty}\frac{dt}{1+t^2}dt+\int^{1}_{0}\biggl|\frac{\mathrm{sin}t}{t}\biggr|dt+\int^{|x_1|+1}_{1}\frac{1}{t}dt\biggr)
\nonumber\\
&\leq&
C\bigl(1+\mathrm{log}(|x_1|+1)\bigr), \label{2.27} 
\end{eqnarray}
where above $C$ is independent of $\epsilon>0$. Then, we have that for $y \in W$
\begin{equation}
\bigl|(n(y)-1) u^{(2)}_{\epsilon}(y)G_{\epsilon}(x,y)\bigr| \leq \frac{C\bigl(1+\mathrm{log}(|y_1|+1)\bigr)}{1+|x-y|^{3/2}},\label{2.28}
\end{equation}
where above $C$ is independent of $y$ and $\epsilon$. Then, right hand side of (\ref{2.28}) is an integrable function in $W$ with respect to $y$. Then, by Lebesgue dominated convergence theorem $\int_{W} (n(y)-1) u^{(2)}_{\epsilon}(y)G_{\epsilon}(x,y)dy$ converges to $\int_{W} (n(y)-1) u^{(2)}(y)G(x,y)dy$ as $\epsilon \to 0$. Therefore, (\ref{2.23}) has been shown.
\end{proof}
\section{Uniqueness of $u^{(2)}$}
In Section 3, we will show the uniqueness of $u^{(2)}$ in Theorem 1.2.
\begin{lem}
Let Assumptions 2.1 hold and let $k>0$ be regular in the sense of Definition 2.3. If $u \in H^{1}_{loc}(\mathbb{R}^2_{+})$ such that
\begin{equation}
\Delta u+k^2(1+q)nu=0, \ \mathrm{in} \ \mathbb{R}^2_{+}, \label{3.1}
\end{equation}
\begin{equation}
u=0 \ \mathrm{on} \ \Gamma_0, \label{3.2}
\end{equation}
and $u$ satisfies the radiation condition in the sense of Definition 2.4, then $u^{(2)}=0$ in $\mathbb{R}^2_{+}$.
\end{lem}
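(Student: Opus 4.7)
The strategy is a flux/energy argument. Apply Green's first identity to $u$ on the rectangle $B_R:=(-R,R)\times(0,R)$ and observe that the identity
$$
\int_{\partial B_R}\bar u\,\partial_\nu u\,ds \;=\; \int_{B_R}\bigl(|\nabla u|^2 - k^2(1+q)n|u|^2\bigr)\,dx
$$
has a real right-hand side (because $q,n$ are real), so $\operatorname{Im}\int_{\partial B_R}\bar u\,\partial_\nu u\,ds = 0$ for every $R>0$. The boundary integral splits into four pieces. On $\Gamma_0\cap\partial B_R$ it vanishes by (3.2). On $\{x_2=R\}$, because $q=0$ and $n=1$ above $\Gamma_h$, the upward propagating radiation condition (2.4) together with the exponential decay of $u^{(2)}$ in $x_2$ (Lemma 2.2) gives, in the limit $R\to\infty$, a non-negative outgoing-energy contribution of the form $\int_{|\xi|<k}\sqrt{k^2-\xi^2}|\hat u^{(1)}(\xi,h)|^2 d\xi/(2\pi)\geq 0$, where $\hat u^{(1)}(\cdot,h)$ is the Fourier transform of the $L^2$-trace of $u^{(1)}$ on $\Gamma_h$ (the cross-terms with the exponentially small trace of $u^{(2)}$ at $x_2=R$ vanish).

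For the lateral pieces on $\{x_1=\pm R\}$ I would write $u=u^{(1)}+u^{(2)}$. Since $u^{(1)}\in H^1(\mathbb{R}\times(0,R'))$ for every $R'$, Fubini yields a subsequence $R_k\to\infty$ along which $\|u^{(1)}(\pm R_k,\cdot)\|_{L^2(0,R')}$ and $\|\partial_{x_1}u^{(1)}(\pm R_k,\cdot)\|_{L^2(0,R')}$ both tend to zero. The $L^2$-trace of $u^{(2)}$ on vertical lines is uniformly bounded (the $\phi_{l,j}$ are evanescent in $x_2$), so by Cauchy--Schwarz the $u^{(1)}$-self-terms and the $u^{(1)}\overline{u^{(2)}}$-cross-terms disappear in the limit. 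The remaining $u^{(2)}$-self-term $\int_0^\infty\partial_{x_1}u^{(2)}\,\overline{u^{(2)}}\,dx_2$ at $x_1=\pm R_k$ is then averaged over $R_k$ in an interval of length $M\to\infty$. The averaging kills cross-terms $\phi_{l,j}\overline{\phi_{l',j'}}$ with $\alpha_j\ne\alpha_{j'}$ (the relevant phase $\alpha_j-\alpha_{j'}\in(-1,1)\setminus\{0\}$ is not an integer, so the oscillating factor has zero mean), while the asymptotics $\psi^+\to 1,\psi^-\to 0,(\psi^\pm)'\to 0$ at $+\infty$ (mirror at $-\infty$) reduce the integral to the sum over modes with $d_{l,j}>0$ (resp.\ $d_{l,j}<0$). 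Within a fixed $X_j$, cross-terms with $l\ne l'$ vanish thanks to the eigenvalue identity (2.14) combined with the orthonormalisation (2.15); the diagonal contributions from (2.14) with $\psi=\phi_{l,j}$ equal $d_{l,j}$.

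Collecting everything, the averaged version of $\operatorname{Im}\int_{\partial B_R}\bar u\,\partial_\nu u\,ds=0$ becomes
$$
0 \;=\; (\text{upward flux}\,\geq 0) \;+\; \tfrac{1}{2\pi}\sum_{l,j}|a_{l,j}|^2\,|d_{l,j}|,
$$
so both summands must vanish. Because $k$ is regular in the sense of Definition 2.3 we have $d_{l,j}\ne 0$ for every $(l,j)$, which forces $a_{l,j}=0$; hence $u^{(2)}\equiv 0$ in $\mathbb{R}^2_+$. The main obstacle is the lateral flux computation: the $u^{(1)}$-contribution decays only in an averaged $L^2$-sense (hence the subsequence $R_k$), the slow decay of $\psi^\pm$ and their derivatives forces one to average in $R$ rather than take pointwise limits, and the cross-terms among the quasi-periodic modes $\phi_{l,j}$ must be eliminated by the two orthogonality principles used here --- the non-integer spacing $\alpha_j-\alpha_{j'}\notin\mathbb{Z}$ for distinct $j,j'$, and the eigenfunction relation (2.14)/(2.15) within a single $X_j$. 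Once these technicalities are in place, the regularity of $k$ closes the argument.
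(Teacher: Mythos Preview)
Your overall energy/flux strategy is the right one, and the endgame---forcing $\sum|a_{l,j}|^2|d_{l,j}|=0$ and invoking regularity of $k$---matches the paper. But there is a genuine gap in how you handle the boundary integrals, and it is precisely the gap that the paper's technical machinery is built to close.

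The problem is your choice of domain $B_R=(-R,R)\times(0,R)$. Your subsequence argument for the lateral $u^{(1)}$-terms uses only that $u^{(1)}\in H^1(\mathbb{R}\times(0,R'))$ for \emph{fixed} $R'$; this lets you make $\|u^{(1)}(\pm R_k,\cdot)\|_{L^2(0,R')}$ small, but the lateral side of $B_{R_k}$ is $\{\pm R_k\}\times(0,R_k)$, whose length tends to infinity. The segment $\{\pm R_k\}\times(R',R_k)$ is simply not covered by your argument, and there is no a priori reason for $u^{(1)}$ or $\partial_{x_1}u^{(1)}$ to be small there. Likewise, on the top $\{x_2=R\}$ your Fourier computation produces the \emph{full-line} flux $\int_{\mathbb{R}}\overline{u^{(1)}}\partial_2 u^{(1)}\,dx_1$, but the identity you start from involves only $\int_{-R}^{R}$; the truncation error $\int_{|x_1|>R}\overline{u^{(1)}}(x_1,R)\partial_2 u^{(1)}(x_1,R)\,dx_1$ is not shown to vanish, and again $H^1$-in-strips gives you no uniform pointwise decay of $u^{(1)}(\cdot,R)$ that would control it as both variables go to infinity together. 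A further incompatibility: you select a sparse subsequence $R_k$ to kill the $u^{(1)}$-terms, but then propose to \emph{average} over $R$ in an interval of length $M$ to kill the $\alpha_j\ne\alpha_{j'}$ cross-terms in the $u^{(2)}$-self-flux; these two operations cannot be performed simultaneously as stated.

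The paper resolves this by working on the asymmetric rectangle $\Omega_N=(-N,N)\times(0,N^s)$ with $0<s<1/24$, so the height grows far more slowly than the width. The lateral side is split at a fixed height $R$: below $R$ the $H^1$-in-strips argument applies; above $R$, on $\{\pm N\}\times(R,N^s)$, one needs pointwise control of $u^{(1)}$, which comes from an explicit integral representation (the paper's Lemma~3.2) combined with the $|x-y|^{-3/2}$ decay of the half-plane Green's function and a Chandler-Wilde subsequence lemma. This yields $|u^{(1)}(N_m,x_2)|\lesssim N_m^{s}/N_m^{1/8}$, and the factor $N^s$ from the length of the segment is beaten by $N^{-1/8}$ precisely when $3s<1/8$. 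The top integral is handled by replacing $u^{(1)}$ with a truncated approximation $u^{(1)}_N$ that genuinely satisfies the upward propagating radiation condition on all of $\Gamma_{N^s}$ (so its full-line flux is $\ge 0$), and then estimating the difference $u^{(1)}-u^{(1)}_N$ by the same integral-representation bounds. None of this is visible in your sketch, and without it the argument does not close.
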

\begin{proof}[{\bf Proof of Lemma 3.1}]
By the definition of the radiation condition, $u$ is of the form $u=u^{(1)}+u^{(2)}$ where $u^{(1)} \bigl|_{\mathbb{R} \times (0,R)} \in H^{1}(\mathbb{R} \times (0,R))$ for all $R>0$, and $u^{(2)}\in L^{\infty}(\mathbb{R}^{2}_{+})$ has the  form
\begin{equation}
u^{(2)}(x)=\psi^{+}(x_1)\sum_{j \in J} \sum_{d_{l,j}>0}a_{l,j}\phi_{l,j}(x)+\psi^{-}(x_1)\sum_{j \in J} \sum_{d_{l,j}<0}a_{l,j}\phi_{l,j}(x), \label{3.3}
\end{equation}
where some $a_{l,j} \in \mathbb{C}$, and $\{d_{l,j},\phi_{l,j}: l=1,...,m_j \}$ are normalized eigenvalues and eigenfunctions of the problem (\ref{2.13}). Here, by Remark 2.5 the function $\psi^{+}$ is chosen as a smooth function such that $\psi^{+}(x_1)=1$ for $x_1\geq \eta$ and $\psi^{+}(x_1)=0$ for $x_1\leq -\eta$, and $\psi^{-}:=1-\psi^{+}$ where $\eta>0$ is some positive number.
\vspace{3mm}\\
{\bf Step1} (Green's theorem in $\Omega_N$): We set $\Omega_N:=(-N,N) \times (0, \phi(N))$ where $\psi(N):=N^{s}$. Later we will choose a appropriate $s \in (0,1)$. Let $R>h$ be large and always fixed, and let $N$ be large enough such that $\phi(N)>R$. We denote by $I_{\pm N}^{R}:=\{\pm N \}\times (0,R)$, $I_{\pm N}^{\phi(N)}:=\{\pm N \}\times (R,\phi(N))$, and $\Gamma_{\phi(N), N}:=(-N,N)\times \{\phi(N) \}$. (see the figure below.) We set $I_{\pm N}:=I_{\pm N}^{R} \cup I_{\pm N}^{\phi(N)}$.  \par \vspace{3mm}
\begin{tikzpicture}
        \path[draw,-{Stealth[length=3mm]}] (-5, 0) -- (5,0) node[above right] {\large $x_1$};
        \path[draw,-{Stealth[length=3mm]}] (0, -0.5) -- (0,4) node[right=2mm] {\large $x_2$} ;
       
        \coordinate (O) at (0,-0.25) node at (O) [right] {$O$};

\draw (4,3) -- (4,0) node [below] {$N$};
\draw (-4,3) -- (-4,0) node [below] {$-N$};
\draw (-4,3) -- (0,3);
\draw (0,3) -- (4,3);
\draw (-4,1.5) -- (0,1.5);
\draw (0,1.5) -- (4,1.5);
\node (A) at (0.5,3) [below] {$\phi(N)$};
\node (B) at (0.2,1.5) [below] {$R$};
\node (C) at (-1,4) [below] {$\Gamma_{\phi(N), N}$};
\node (D) at (0,3) [above] {\scalebox{6.7}[1]{\rotatebox{270}{$\Biggl\{$}}};
\node (E) at (-4.2,0) [above] {{\large$ \Biggl\{$}};
\node (F) at (-4.2,1.5) [above] {{\large$ \Biggl\{$}};
\node (G) at (4.2,0) [above] {{\large$ \Biggr\}$}};
\node (H) at (4.2,1.5) [above] {{\large$ \Biggr\}$}};
\node (I) at (-4.7,0.3) [above] {$I_{-N}^{R}$};
\node (J) at (-4.75,1.8) [above] {$I_{-N}^{\phi(N)}$};
\node (K) at (4.6,0.3) [above] {$I_{N}^{R}$};
\node (L) at (4.8,1.85) [above] {$I_{N}^{\phi(N)}$};
\end{tikzpicture} 
\vspace{5mm}\par
By Green's first theorem in $\Omega_N$ and $u=0$ on $(-N,N)\times \{ 0\}$, we have 
\begin{eqnarray}
\lefteqn{ \int_{\Omega_N}\{-k^2(1+q)n|u|^{2}+|\nabla u|^{2} \}dx=\int_{\Omega_N}\{ \overline{u}\Delta u+|\nabla u|^{2} \}dx}
\nonumber\\
&=&\int_{I_{N}} \overline{u}\frac{\partial u}{\partial x_1}  ds-\int_{I_{-N}} \overline{u}\frac{\partial u}{\partial x_1} ds +\int_{\Gamma_{\phi(N),N}} \overline{u}\frac{\partial u}{\partial x_2} ds
\nonumber\\
&=&\int_{I_{N}} \overline{u^{(2)}}\frac{\partial u^{(2)}}{\partial x_1} ds-\int_{I_{-N}} \overline{u^{(2)}}\frac{\partial u^{(2)}}{\partial x_1} ds\nonumber
\end{eqnarray}
\begin{eqnarray}
&+&\int_{I_{N}} \overline{u^{(1)}}\frac{\partial u^{(1)}}{\partial x_1} ds+\int_{I_{N}} \overline{u^{(1)}}\frac{\partial u^{(2)}}{\partial x_1} ds+\int_{I_{N}} \overline{u^{(2)}}\frac{\partial u^{(1)}}{\partial x_1} ds
\nonumber\\
&-&\int_{I_{-N}} \overline{u^{(1)}}\frac{\partial u^{(1)}}{\partial x_1} ds-\int_{I_{-N}} \overline{u^{(1)}}\frac{\partial u^{(2)}}{\partial x_1} ds-\int_{I_{-N}} \overline{u^{(2)}}\frac{\partial u^{(1)}}{\partial x_1} ds
\nonumber\\
&+&\int_{\Gamma_{\phi(N),N}} \overline{u}\frac{\partial u}{\partial x_2} ds.\label{3.4}
\end{eqnarray}
By the same argument in Theorem 4.6 of \cite{Kirsch and Lechleiter1} and Lemma 6.3 of \cite{Kirsch and Lechleiter2}, we can show that
\begin{eqnarray}
\lefteqn{ \int_{I_{N}} \overline{u^{(2)}}\frac{\partial u^{(2)}}{\partial x_1} ds-\int_{I_{-N}} \overline{u^{(2)}}\frac{\partial u^{(2)}}{\partial x_1} ds }
\nonumber\\
&+&\int_{I_{N}^{R}} \overline{u^{(1)}}\frac{\partial u^{(1)}}{\partial x_1} ds+\int_{I_{N}^{R}} \overline{u^{(1)}}\frac{\partial u^{(2)}}{\partial x_1} ds+\int_{I_{N}^{R}} \overline{u^{(2)}}\frac{\partial u^{(1)}}{\partial x_1} ds
\nonumber\\
&-&\int_{I_{-N}^{R}} \overline{u^{(1)}}\frac{\partial u^{(1)}}{\partial x_1} ds-\int_{I_{-N}^{R}} \overline{u^{(1)}}\frac{\partial u^{(2)}}{\partial x_1} ds-\int_{I_{-N}^{R}} \overline{u^{(2)}}\frac{\partial u^{(1)}}{\partial x_1} ds
\nonumber\\
&=&\frac{1}{2 \pi}\sum_{j \in J} \sum_{d_{l,j},d_{l',j}>0}\overline{a_{l,j}}a_{l',j}\int_{C_{\phi(N)}}\overline{\phi_{l,j}}\frac{\partial \phi_{l',j}}{\partial x_1}dx
\nonumber\\
&-&\frac{1}{2 \pi}\sum_{j \in J} \sum_{d_{l,j},d_{l',j}<0}\overline{a_{l,j}}a_{l',j}\int_{C_{\phi(N)}}\overline{\phi_{l,j}}\frac{\partial \phi_{l',j}}{\partial x_1}dx+o(1),\label{3.5}
\end{eqnarray}
and the first and second term in the right hand side converge as $N \to \infty$ to $\frac{ik}{2\pi}\sum_{j \in J}\sum_{d_{l,j}>0}|a_{l,j}|^{2}d_{l,j}$  and $-\frac{ik}{2\pi}\sum_{j \in J}\sum_{d_{l,j}<0}|a_{l,j}|^{2}d_{l,j}$ respectively. Therefore, taking an imaginary part in (\ref{3.4}) yields that
\begin{eqnarray}
\lefteqn{0=\mathrm{Im}\Biggl[\frac{1}{2\pi} \sum_{j \in J} \sum_{d_{l,j},d_{l',j}>0}\overline{a_{l,j}}a_{l',j}\int_{C_{\phi(N)}}\overline{\phi_{l,j}}\frac{\partial \phi_{l',j}}{\partial x_1}dx \Biggr]}
\nonumber\\
&-&\mathrm{Im}\Biggl[\frac{1}{2\pi} \sum_{j \in J}\sum_{d_{l,j},d_{l',j}<0}\overline{a_{l,j}}a_{l',j}\int_{C_{\phi(N)}}\overline{\phi_{l,j}}\frac{\partial \phi_{l',j}}{\partial x_1}dx\Biggr]
\nonumber\\
&+&\mathrm{Im}\int_{I_{N}^{\phi(N)}} \overline{u^{(1)}}\frac{\partial u^{(1)}}{\partial x_1} ds+\mathrm{Im}\int_{I_{N}^{\phi(N)}} \overline{u^{(1)}}\frac{\partial u^{(2)}}{\partial x_1} ds+\mathrm{Im}\int_{I_{N}^{\phi(N)}} \overline{u^{(2)}}\frac{\partial u^{(1)}}{\partial x_1} ds
\nonumber\\
&-&\mathrm{Im}\int_{I_{-N}^{\phi(N)}} \overline{u^{(1)}}\frac{\partial u^{(1)}}{\partial x_1} ds-\mathrm{Im}\int_{I_{-N}^{\phi(N)}} \overline{u^{(1)}}\frac{\partial u^{(2)}}{\partial x_1} ds-\mathrm{Im}\int_{I_{-N}^{\phi(N)}} \overline{u^{(2)}}\frac{\partial u^{(1)}}{\partial x_1} ds
\nonumber\\
&+&\mathrm{Im}\int_{\Gamma_{\phi(N),N}} \overline{u}\frac{\partial u}{\partial x_2}ds+o(1).\label{3.6}
\end{eqnarray}
We set 
\begin{equation}
J_{\pm}(N):=\pm \mathrm{Im}\int_{I_{\pm N}^{\phi(N)}} \overline{u^{(1)}}\frac{\partial u^{(1)}}{\partial x_1} ds\pm \mathrm{Im}\int_{I_{\pm N}^{\phi(N)}} \overline{u^{(1)}}\frac{\partial u^{(2)}}{\partial x_1} ds\pm \mathrm{Im}\int_{I_{\pm N}^{\phi(N)}} \overline{u^{(2)}}\frac{\partial u^{(1)}}{\partial x_1} ds,\label{3.7}
\end{equation}
and we will show that $\mathrm{limsup_{N\to \infty}}J_{\pm}(N)\geq0$.
\vspace{2mm}\\
{\bf Step2 ($\mathrm{limsup_{N\to \infty}}J_{\pm}(N)\geq0$):}
By Cauchy Schwarz inequality we have 
\begin{eqnarray}
\lefteqn{|J_{+}(N)|\leq \biggl(\int^{\phi(N)}_{R} |u^{(1)}(N,x_2)|^2dx_2\biggr)^{1/2}\biggl(\int^{\phi(N)}_{R} \biggl|\frac{\partial u^{(1)}}{\partial x_1} (N,x_2)\biggr|^2dx_2\biggr)^{1/2}}
\nonumber\\
&+&\biggl(\int^{\phi(N)}_{R} |u^{(1)}(N,x_2)|^2dx_2\biggr)^{1/2}\biggl(\int^{\phi(N)}_{R} \biggl|\frac{\partial u^{(2)}}{\partial x_1} (N,x_2)\biggr|^2dx_2\biggr)^{1/2}
\nonumber\\
&+&\biggl(\int^{\phi(N)}_{R} |u^{(2)}(N,x_2)|^2dx_2\biggr)^{1/2}\biggl(\int^{\phi(N)}_{R} \biggl|\frac{\partial u^{(1)}}{\partial x_1} (N,x_2)\biggr|^2dx_2\biggr)^{1/2}
\nonumber\\
&\leq&\biggl(\int^{\phi(N)}_{R} |u^{(1)}(N,x_2)|^2dx_2\biggr)^{1/2}\biggl(\int^{\phi(N)}_{R} \biggl|\frac{\partial u^{(1)}}{\partial x_1} (N,x_2)\biggr|^2dx_2\biggr)^{1/2}
\nonumber\\
&+&C(\phi(N)-R)^{1/2}\biggl(\int^{\phi(N)}_{R} |u^{(1)}(N,x_2)|^2dx_2\biggr)^{1/2}
\nonumber\\
&+&C(\phi(N)-R)^{1/2}\biggl(\int^{\phi(N)}_{R} \biggl|\frac{\partial u^{(1)}}{\partial x_1} (N,x_2)\biggr|^2dx_2\biggr)^{1/2}.\label{3.8}
\end{eqnarray}
In order to estimate $u^{(1)}$, we will show the following lemma.
\begin{lem}
$u^{(1)}$ has an integral representation of the form
\begin{eqnarray}
u^{(1)}(x)&=&\int_{y_2>0}\sigma(y)G(x,y)dy+k^{2}\int_{W} \bigl(n(y)(1+q(y))-1\bigr) u^{(1)}(y)G(x,y)dy
\nonumber\\
&+&k^2\int_{Q} n(y)q(y)u^{(2)}(y)G(x,y)dy, \ \ x_2>0,\label{3.9} 
\end{eqnarray}
where $\sigma:=\Delta u^{(2)}+k^2nu^{(2)}$.
\end{lem}
\begin{proof}[Proof of Lemma 3.2]
First, we will consider an integral representation of $u^{(2)}$. Let $N>0$ be large enough. By Green's representation theorem in $(-N,N)\times (0,N^{1/4})$, we have
\begin{eqnarray}
u^{(2)}(x)&=&\int_{(-N,N) \times \{N^{1/4} \}} \bigl[u^{(2)}(y)\frac{\partial G}{\partial y_2}(x,y)-G(x,y)\frac{\partial u^{(2)}}{\partial y_2}(y)\bigr]ds(y)
\nonumber
\end{eqnarray}
\begin{eqnarray}
&+&\biggl(\int_{\{N \} \times (0, N^{1/4})}-\int_{\{-N \} \times (0, N^{1/4})}\biggr) \bigl[u^{(2)}(y)\frac{\partial G}{\partial y_1}(x,y)-G(x,y)\frac{\partial u^{(2)}}{\partial y_1}(y)\bigr]ds(y)
\nonumber\\
&-&\int_{(-N,N) \times (0,N^{1/4})}\bigl[\sigma(y)+k^2(1-n(y))u^{(2)}(y)\bigr]G(x,y)dy.\label{3.10}
\end{eqnarray}
By Lemma 3.1 of \cite{Chandler and Christopher}, the Dirichlet Green's function $G(x,y)$ is of the estimation
\begin{equation}
|G(x,y)|, \ |\nabla_y G(x,y)| \leq C \frac{x_2 y_2}{1+|x-y|^{3/2}}, \ |x-y|>1. \label{3.11} 
\end{equation}
By Lemma 2.2 we have that $|u^{(2)}(x)|, \ \bigl|\frac{\partial u^{(2)}(x)}{\partial x_2}\bigr| \leq ce^{-\delta |x_2|}$ for all $x \in \mathbb{R}^{2}_{+}$, and some $c,\delta >0$. Then, we obtain 
\begin{eqnarray}
\lefteqn{\Biggl|\int_{(-N,N) \times \{N^{1/4} \}} \bigl[u^{(2)}(y)\frac{\partial G}{\partial y_2}(x,y)-G(x,y)\frac{\partial u^{(2)}}{\partial y_2}(y)\bigr]ds(y)\Biggr|}
\nonumber\\
&\leq&C \int_{-N}^{N} \frac{x_2 e^{-\delta N^{1/4}}}{|N^{1/4}-x_2|^{3/2}} dy_2 \leq C\frac{x_2 N e^{-\delta N^{1/4}}}{|N^{1/4}-x_2|^{3/2}}.\label{3.12} 
\end{eqnarray}
Furthermore,
\begin{eqnarray}
\lefteqn{\Biggl|\int_{\{\pm N \} \times (0, N^{1/4}) } \bigl[u^{(2)}(y)\frac{\partial G}{\partial y_1}(x,y)-G(x,y)\frac{\partial u^{(2)}}{\partial y_1}(y)\bigr]ds(y)\Biggr|}
\nonumber\\
&\leq&C \int_{0}^{N^{1/4}} \frac{x_2 y_2}{|\pm N-x_1|^{3/2}} dy_2 \leq C\frac{x_2 N^{1/2}}{|\pm N-x_1|^{3/2}}.\label{3.13} 
\end{eqnarray}
Therefore, as $N \to \infty$ in (\ref{3.10}) we get
\begin{eqnarray}
u^{(2)}(x)=-\int_{y_2>0}\sigma(y)G(x,y)dy+k^2\int_{W}(n(y)-1)u^{(2)}(y)G(x,y)dy. \ \ \ \label{3.14} 
\end{eqnarray}
By Lemma 2.7, we have (substitute $-k^2qnu$ for $f$ in (\ref{2.18}))
\begin{equation}
u(x)=k^2\int_{W}\bigl(n(y)-1\bigr) u(y)G(x,y)dy+k^2\int_{Q}q(y)n(y)u(y)G(x,y)dy. \label{3.15}
\end{equation}
Combining (\ref{3.14}) with (\ref{3.15}) we have
\begin{eqnarray}
u^{(1)}(x)&=&-u^{(2)}(x)+k^2\int_{W}\bigl(n(y)-1\bigr) u(y)G(x,y)dy+k^2\int_{Q}q(y)n(y)u(y)G(x,y)dy
\nonumber
\end{eqnarray}
\begin{eqnarray}
&=&\int_{y_2>0}\sigma(y) G(x,y)dy-k^2\int_{W}(n(y)-1)u^{(2)}(y)G(x,y)dy
\nonumber\\
&+&k^2\int_{W}\bigl(n(y)-1\bigr) u(y)G(x,y)dy+k^2\int_{Q}q(y)n(y)u(y)G(x,y)dy
\nonumber\\
&=&\int_{\mathbb{R}_{+}^{2}}\sigma(y) G(x,y)dy+k^{2}\int_{W} \bigl( n(y)(1+q(y))-1\bigr) u^{(1)}(y)G(x,y)dy
\nonumber\\
&+&k^2\int_{Q}n(y)q(y)u^{(2)}(y)G(x,y)dy.\label{3.16} 
\end{eqnarray}
Therefore, Lemma 3.2 has been shown.
\end{proof}
\vspace{0.5cm}
We set $u^{\pm}(x):=\sum_{j \in J} \sum_{d_{l,j}\lessgtr 0}a_{l,j}\phi_{l,j}(x)$. Then, by simple calculation we can show
\begin{equation}
\sigma(y)=\frac{d^{2} \psi^{+}(y_1)}{d y^{2}_1}u^{+}(y)+2\frac{d \psi^{+}(y_1)}{d y_1}\frac{\partial u^{+}(y)}{\partial y_1}+\frac{d^{2} \psi^{-}(y_1)}{d y^{2}_1}u^{-}(y)+2\frac{d \psi^{-}(y_1)}{d y_1}\frac{\partial u^{-}(y)}{\partial y_1},\label{3.17} 
\end{equation}
which implies that $\mathrm{supp}\sigma \subset (-\eta, \eta)\times (0,\infty)$. By Lemma 3.2 we have for $R<x_2<\phi(N)$
\begin{eqnarray}
\lefteqn{|u^{(1)}(N,x_2)|, \ \biggl|\frac{\partial u^{(1)}}{\partial x_1} (N,x_2)\biggr|\leq C\int_{(-\eta, \eta)\times (0,\infty)} |\sigma(y)|\frac{\phi(N) y_2}{|N-\eta|^{3/2}}dy}
\nonumber\\
&+&
C\int_{W} |u^{(1)}(y)|\frac{\phi(N) h}{(1+|N-y_1|)^{3/2}}dy
+C\int_{Q} \frac{\phi(N)|u^{(2)}(y)|}{|N-y_1|^{3/2}}dy
\nonumber\\
&\leq&C\frac{\phi(N)}{N^{3/2}}+C\phi(N)\int_{W} \frac{|u^{(1)}(y)|}{(1+|N-y_1|)^{3/2}}dy.\label{3.18} 
\end{eqnarray}
We have to estimate the second term in right hand side. The following lemma was shown in Lemma 4.12 of \cite{Chandler}.
\begin{lem}
Assume that $\varphi \in L^{2}_{loc}(\mathbb{R})$ such that 
\begin{equation}
\mathrm{sup}_{A>0}\Bigl\{ (1+A^{2})^{-\epsilon}\int^{A}_{-A}|\varphi(t)|^{2}dt\Bigr\}< \infty, \label{3.19} 
\end{equation}
for some $\epsilon>0$. Then, for every $\alpha \in [0,\frac{1}{2}-\epsilon)$ there exists a constant $C>0$ and a sequence $\{A_m \}_{m\in \mathbb{N}}$ such that $A_m \to \infty$ as $m \to \infty$ and 
\begin{equation}
\int_{K_{A_m}}|\varphi(t)|^{2}dt \leq C A^{-\alpha}_m, \ m \in\mathbb{N},\label{3.20} 
\end{equation}
where $K_A:=K^{+}_{A}\cup K^{-}_{A}$, $K^{+}_{A}:=(-A^{+},A^{+})\setminus(-A,A)$, $K^{-}_{A}:=(-A,A)\setminus(-A^{-},A^{-})$, and $A^{\pm}:=A\pm A^{1/2}$ for $A \in [1, \infty)$.
\end{lem}
Applying Lemma 3.3 to $\varphi=\bigl(\int^{h}_{0} \bigl|u^{(1)} (\cdot,y_2)\bigr|^2dy_2\bigr)^{1/2} \in L^{2}(\mathbb{R})$, there exists a sequence $\{N_m \}_{m\in \mathbb{N}}$ such that $N_m \to \infty$ as $m \to \infty$ and 
\begin{equation}
\int_{K_{N_m}}\int_{0}^{h}|u^{(1)} (y_1,y_2)|^{2}dy_1dy_2 \leq C N^{-1/4}_m, \ m \in\mathbb{N}.\label{3.21} 
\end{equation}
Then, by Cauchy Schwarz inequality we have 
\begin{eqnarray}
\lefteqn{\int_{W} \frac{|u^{(1)}(y)|}{(1+|N-y_1|)^{3/2}}dy=\biggl(\int_{-N_{m}^{-}}^{N_{m}^{-}}+\int_{K_{N_{m}}}+\int_{\mathbb{R}\setminus [-N_m^+,N_m^+]}\biggr)\int_{0}^{h} \frac{|u^{(1)}(y)|}{(1+|N_m-y_1|)^{3/2}}dy}
\nonumber\\
&\leq&C\biggl(\int_{-N_{m}^{-}}^{N_{m}^{-}}\frac{dy_1}{(1+N_m-|y_1|)^{3}}\biggr)^{1/2}+C\biggl(\int_{K_{N_m}}\int_{0}^{h}|u^{(1)} (y_1,y_2)|^{2}dy_1dy_2 \biggr)^{1/2}
\nonumber\\
&+&C\biggl(\int_{\mathbb{R}\setminus [-N_m^+,N_m^+]}\frac{dy_1}{(1+|y_1|-N_m)^{3}}\biggr)^{1/2}
\nonumber\\
&\leq&C\biggl(\int_{0}^{N_{m}^{-}}\frac{dy_1}{(1+N_m-y_1)^{3}}\biggr)^{1/2}+CN^{-1/8}_m+C\biggl(\int_{N_{m}^+}^{\infty}\frac{dy_1}{(1+y_1-N_m)^{3}}\biggr)^{1/2}
\nonumber\\
&\leq&CN^{-1/8}_m.\label{3.22} 
\end{eqnarray}
With (\ref{3.18}) we have for $m \in \mathbb{N}$,
\begin{eqnarray}
|u^{(1)}(N_m,x_2)|, \ \biggl|\frac{\partial u^{(1)}}{\partial x_1} (N_m,x_2)\biggr|
\leq C\frac{\phi(N_m)}{N_m^{1/8}}.\label{3.23} 
\end{eqnarray}
Therefore, by (\ref{3.8}) we have 
\begin{eqnarray}
|J_{+}(N_m)|
&\leq&C(\phi(N_m)-R)\frac{\phi(N_m)^{2}}{N_m^{1/4}}
+C(\phi(N_m)-R)\frac{\phi(N_m)}{N_m^{1/8}}
\nonumber\\
&\leq&C(\phi(N_m)-R)\frac{\phi(N_m)^{2}}{N_m^{1/8}}\leq C\frac{\phi(N_m)^{3}}{N_m^{1/8}}.\label{3.24}
\end{eqnarray}
Since $\phi(N)=N^s$, if we choose $s \in (0,1)$ such that $3s<\frac{1}{8}$, that is, $0<s<\frac{1}{24}$ the right hand side in (\ref{3.24}) converges to zero as $m \to \infty$. Therefore, $\mathrm{limsup_{N\to \infty}}J_{+}(N)\geq0$. By the same argument of $J_{+}$, we can show that $\mathrm{limsup_{N\to \infty}}J_{-}(N)\geq0$, which yields Step 2.
\vspace{5mm}\\
Next, we discuss the last term in (\ref{3.6}). By the same argument in Lemma 3.2 that we apply Green's representation theorem in $x_2>h$ and use the Dirichlet Green's function $G_h$ of $\mathbb{R}^{2}_{x_2>h}(:=\mathbb{R}\times (h, \infty))$ insted of $G$, $u^{(1)}$ can also be of another integral representation for $x_2>h$
\begin{eqnarray}
u^{(1)}(x)&=&\int_{y_2>h}\sigma(y)G_h(x,y)dy+2\int_{\Gamma_h} u^{(1)}(y)\frac{\partial \Phi_k(x,y)}{\partial y_2}ds(y)
\nonumber\\
&=:&v^{1}(x)+v^{2}(x),\label{3.25}
\end{eqnarray}
where $G_h$ is defined by $G_h(x,y):=\Phi_k(x,y)-\Phi_k(x,y^*_h)$ where $y^*_h=(y_1, 2h-y_2)$. We define approximation $u^{(1)}_N$ of $u^{(1)}$ by 
\begin{eqnarray}
u^{(1)}_N(x)&:=&\int_{y_2>0}\chi_{\phi(N)-1}(y_2)\sigma(y)G(x,y)dy+2\int_{\Gamma_h} \chi_{N}(y_1)u^{(1)}(y)\frac{\partial \Phi_k(x,y)}{\partial y_2}ds(y)
\nonumber\\
&=:&v^{1}_{N}(x)+v^{2}_{N}(x), \ \ \  x_2>h,\label{3.26}
\end{eqnarray}
where $\chi_a$ is defined by for $a>0$,
\begin{equation}
\chi_{a}(t):=\left\{ 
       \begin{array}{rl}
       1 & \quad \mbox{for $|t|\leq a$} \\
       0 & \quad \mbox{for $|t|> a$}.
       \end{array}\right.\label{3.27}
\end{equation}
By Lemma 3.4 of \cite{Chandler and Zhang2} and Lemma 2.1 of \cite{Chandler and Zhang1} we can show that $v^{1}_{N}$ and $v^{2}_{N}$ satisfy the upward propagating radiation condition, which implies that so does $u^{(1)}_N$. Furthermore, by the definition of $u^{(1)}_N$ we can show that $u^{(1)}_N(\cdot, \phi(N)-1) \in L^{2}(\mathbb{R})\cap L^{\infty}(\mathbb{R})$. Then, by Lemma 6.1 of \cite{Chandler and Zhang2} we have that
\begin{equation}
\mathrm{Im}\int_{\Gamma_{\phi(N)}}\overline{u^{(1)}_N}\frac{\partial u^{(1)}_N}{\partial x_2}ds \geq0.\label{3.28}
\end{equation}
Combining (\ref{3.6}) with (\ref{3.28}) we have
\begin{eqnarray}
\lefteqn{0\geq-\mathrm{Im}\int_{\Gamma_{\phi(N)}}\overline{u^{(1)}_N}\frac{\partial u^{(1)}_N}{\partial x_2}ds}
\nonumber\\
&=&\mathrm{Im}\Biggl[ \frac{1}{2\pi}\sum_{j \in J} \sum_{d_{l,j},d_{l',j}>0}\overline{a_{l,j}}a_{l',j}\int_{C_{\phi(N)}}\overline{\phi_{l,j}}\frac{\partial \phi_{l,j}}{\partial x_1}dx \Biggr]
\nonumber\\
&-&
\mathrm{Im}\Biggl[\frac{1}{2\pi}\sum_{j \in J} \sum_{d_{l,j},d_{l',j}<0}\overline{a_{l,j}}a_{l',j}\int_{C_{\phi(N)}}\overline{\phi_{l,j}}\frac{\partial \phi_{l,j}}{\partial x_1}dx\Biggr]+J_{+}(N)+J_{-}(N)
\nonumber\\
&+&
\mathrm{Im}\int_{\Gamma_{\phi(N),N}} \overline{u}\frac{\partial u}{\partial x_2}-\mathrm{Im}\int_{\Gamma_{\phi(N)}}\overline{u^{(1)}_N}\frac{\partial u^{(1)}_N}{\partial x_2}ds+o(1).\label{3.29}
\end{eqnarray}
We observe the last term
\begin{equation}
\mathrm{Im}\int_{\Gamma_{\phi(N),N}} \overline{u}\frac{\partial u}{\partial x_2}-\mathrm{Im}\int_{\Gamma_{\phi(N)}}\overline{u^{(1)}_N}\frac{\partial u^{(1)}_N}{\partial x_2}ds=:L(N)+M(N),\label{3.30}
\end{equation}
where
\begin{equation}
L(N):=\mathrm{Im}\int_{\Gamma_{\phi(N),N}}\overline{u^{(1)}}\frac{\partial u^{(1)}}{\partial x_2}ds-\mathrm{Im}\int_{\Gamma_{\phi(N)}}\overline{u^{(1)}_N}\frac{\partial u^{(1)}_N}{\partial x_2}ds,\label{3.31}
\end{equation}
\begin{equation}
M(N):=\mathrm{Im}\int_{\Gamma_{\phi(N),N}}\overline{u^{(1)}}\frac{\partial u^{(2)}}{\partial x_2}ds+\mathrm{Im}\int_{\Gamma_{\phi(N),N}}\overline{u^{(2)}}\frac{\partial u^{(1)}}{\partial x_2}ds+\mathrm{Im}\int_{\Gamma_{\phi(N),N}}\overline{u^{(2)}}\frac{\partial u^{(2)}}{\partial x_2}ds.\label{3.32}
\end{equation}
By Lemma 3.2 we can show $|u^{(1)}(x_1, \phi(N))|$, $|\frac{\partial u^{(1)}}{\partial x_2}(x_1,\phi(N))|\leq C\phi(N)$ for $x_1\in \mathbb{R}$, and by Lemma 2.2 we have $|u^{(2)}(x_1, \phi(N))|$, $|\frac{\partial u^{(2)}}{\partial x_2}(x_1, \phi(N))|\leq Ce^{-\delta \phi(N)}$ for $x_1\in \mathbb{R}$. Then, we have
\begin{eqnarray}
|M(N)|
&\leq&
\int^{N}_{-N}|u^{(1)}(x_1, \phi(N))|\Bigl|\frac{\partial u^{(2)}}{\partial x_2}(x_1,\phi(N))\Bigr|dx_1
\nonumber\\
&+&
\int^{N}_{-N}|u^{(2)}(x_1, \phi(N))|\Bigl|\frac{\partial u^{(1)}}{\partial x_2}(x_1,\phi(N))\Bigr|dx_1
\nonumber\\
&+&\int^{N}_{-N}|u^{(2)}(x_1, \phi(N))|\Bigl|\frac{\partial u^{(2)}}{\partial x_2}(x_1,\phi(N))\Bigr|dx_1
\nonumber\\
&\leq&C(N\phi(N)e^{-\delta \phi(N)}+Ne^{-2\delta \phi(N)})
\nonumber\\
&\leq&
CN\phi(N)e^{-\delta \phi(N)},
\label{3.33}
\end{eqnarray}
which implies that $M(N)=o(1)$ as $N \to \infty$. Hence, we will show that $\mathrm{limsup_{N\to \infty}}L(N)\geq0$.\vspace{1mm}\\
{\bf Step3 ($\mathrm{limsup_{N\to \infty}}L(N)\geq0$):}
First, we observe that
\begin{eqnarray}
|L(N)|&\leq&
\biggl|\mathrm{Im}\int_{\Gamma_{\phi(N),N}}\overline{u^{(1)}}\frac{\partial u^{(1)}}{\partial x_2}ds-\mathrm{Im}\int_{\Gamma_{\phi(N),N}}\overline{u^{(1)}}\frac{\partial u^{(1)}_N}{\partial x_2}ds\biggr|
\nonumber\\
&+&\biggl| \mathrm{Im}\int_{\Gamma_{\phi(N)},N}\overline{u^{(1)}}\frac{\partial u^{(1)}_N}{\partial x_2}ds-\mathrm{Im}\int_{\Gamma_{\phi(N)},N}\overline{u^{(1)}_N}\frac{\partial u^{(1)}_N}{\partial x_2}ds\biggr|
\nonumber\\
&+&\biggl| \mathrm{Im}\int_{\Gamma_{\phi(N)}\setminus\Gamma_{\phi(N),N} }\overline{u^{(1)}_N}\frac{\partial u^{(1)}_N}{\partial x_2}ds\biggr|
\nonumber
\end{eqnarray}
\begin{eqnarray}
&\leq&
\int_{-N}^{N}|u^{(1)}(x_1,\phi(N))|\Bigl|\frac{\partial u^{(1)}}{\partial x_2}(x_1,\phi(N))-\frac{\partial u^{(1)}_N}{\partial x_2}(x_1,\phi(N))\Bigr|ds\nonumber\\
&+&\int_{-N}^{N}|u^{(1)}(x_1,\phi(N))-u^{(1)}_N(x_1,\phi(N))|\Bigl|\frac{\partial u^{(1)}_N}{\partial x_2}(x_1,\phi(N))\Bigr|ds
\nonumber\\
&+&\int_{\mathbb{R}\setminus (-N,N)}|u^{(1)}_N(x_1,\phi(N))|\Bigl|\frac{\partial u^{(1)}_N}{\partial x_2}(x_1,\phi(N))\Bigr|ds\label{3.34}.
\end{eqnarray}
By Lemma 2.2 $\sigma$ has a exponential decay in $y_2$. Then, we have for $x_1\in \mathbb{R}$,
\begin{eqnarray}
\lefteqn{|v^{1}(x_1,\phi(N))|, \ \biggl|\frac{\partial v^{1}}{\partial x_2} (x_1,\phi(N))\biggr|, \ |v^{1}_N(x_1,\phi(N))|, \ \biggl|\frac{\partial v^{1}_N}{\partial x_2} (x_1,\phi(N))\biggr|}
\nonumber\\
&\leq&
C\int_{(-\eta, \eta)\times(0,\infty)} \frac{e^{-\delta y_2}\phi(N) y_2}{(1+|x_1-y_1|)^{3/2}}dy\leq C\frac{\phi(N)}{(1+|x_1|)^{3/2}},\label{3.35}
\end{eqnarray}
and
\begin{eqnarray}
\lefteqn{|v^{1}(x_1,\phi(N))-v^{1}_N(x_1,\phi(N))|, \ \biggl|\frac{\partial v^{1}}{\partial x_2} (x_1,\phi(N))-\frac{\partial v^{1}_N}{\partial x_2} (x_1,\phi(N))\biggr|}
\nonumber\\
&\leq&
C\int_{(-\eta, \eta)\times(\phi(N)-1,\infty)} \frac{e^{-\delta y_2}\phi(N) y_2}{(1+|x_1-y_1|)^{3/2}}dy
\nonumber\\
&\leq&
C\biggl(\int_{\phi(N)}^{\infty} e^{-\delta y_2}y_2dy_2\biggr)\frac{\phi(N)}{(1+|x_1|)^{3/2}}dy\leq \frac{e^{-\delta \phi(N)}\phi(N)}{(1+|x_1|)^{3/2}}.\label{3.36}
\end{eqnarray}
Since the fundamental solution to Helmholtz equation $\Phi(x,y)$ is of the following estimation (see e.g., \cite{Chandler and Christopher}) for $|x-y|\geq 1$
\begin{equation}
\biggl|\frac{\partial \Phi}{\partial y_2}(x,y)\biggr|\leq C\frac{|x_2-y_2|}{1+|x-y|^{3/2}}, \ \ \ \biggl|\frac{\partial^{2} \Phi}{\partial x_2 \partial y_2}(x,y)\biggr|\leq C\frac{|x_2-y_2|^2}{1+|x-y|^{3/2}}, \label{3.37}
\end{equation}
we can show that for $x_1 \in \mathbb{R}$
\begin{equation}
|v^{2}(x_1,\phi(N))|\leq C\phi(N)W_{\infty}(x_1), \ \ \ |v^{2}_N(x_1,\phi(N))|\leq C\phi(N)W_{N}(x_1),\label{3.38}
\end{equation}
and
\begin{equation}
\biggl|\frac{\partial v^{2}}{\partial x_2}(x_1,\phi(N))\biggr|\leq C\phi(N)^2 W_{\infty}(x_1), \ \ \ \biggl|\frac{\partial v^{2}_N}{\partial x_2}(x_1,\phi(N))\biggr| \leq C\phi(N)^2 W_{N}(x_1),\label{3.39}
\end{equation}
and
\begin{equation}
|v^{2}(x_1,\phi(N))-v^{2}_N(x_1,\phi(N))|\leq C\phi(N)\bigl(W_{\infty}(x_1)- W_{N}(x_1)\bigr),\label{3.40}
\end{equation}
and
\begin{equation}
\biggl|\frac{\partial v^{2}}{\partial x_2}(x_1,\phi(N))-\frac{\partial v^{2}_N}{\partial x_2}(x_1,\phi(N))\biggr|\leq C\phi(N)^2 (W_{\infty}(x_1)- W_{N}(x_1)\bigr),\label{3.41}
\end{equation}
where $W_N$ is defined by for $N \in (0, \infty]$
\begin{equation}
W_N(x_1):=\int_{-N}^{N}\frac{|u^{(1)}(y_1,h)|}{(1+|x_1-y_1|)^{3/2}}dy_1, \ \ \ x_1 \in \mathbb{R}.\label{3.42}
\end{equation}
Using (\ref{3.35})--(\ref{3.41}), we continue to estimate (\ref{3.34}). By Cauchy Schwarz inequality we have 
\begin{eqnarray}
\lefteqn{|L(N)|\leq C\int_{-N}^{N}\Bigl\{\frac{\phi(N)}{(1+|x_1|)^{3/2}}+\phi(N)W_{\infty}(x_1)\Bigr\}}
\nonumber\\
&&\ \ \ \ \ \ \ \ \ \ \ \ \ \ \ \ \ \ \ \ \ \ \ \ \ \ \ \ \ \ \ \ \ \ \times\Bigl\{\frac{\phi(N)e^{-\sigma \phi(N)}}{(1+|x_1|)^{3/2}}+\phi(N)^2\bigl(W_{\infty}(x_1)-W_{N}(x_1)\bigr)\Bigr\}dx_1
\nonumber\\
&+&\int_{-N}^{N}\Bigl\{\frac{\phi(N)e^{-\sigma \phi(N)}}{(1+|x_1|)^{3/2}}+\phi(N)\bigl(W_{\infty}(x_1)-W_{N}(x_1)\bigr)\Bigr\}
\nonumber\\
&&\ \ \ \ \ \ \ \ \ \ \ \ \ \ \ \ \ \ \ \ \ \ \ \ \ \ \ \ \ \ \ \ \ \times \Bigl\{\frac{\phi(N)}{(1+|x_1|)^{3/2}}+\phi(N)^2W_{N}(x_1)\Bigr\}dx_1
\nonumber\\
&+&\int_{\mathbb{R}\setminus (-N,N)}\Bigl\{\frac{\phi(N)}{(1+|x_1|)^{3/2}}+\phi(N)W_{N}(x_1)\Bigr\}\Bigl\{\frac{\phi(N)}{(1+|x_1|)^{3/2}}+\phi(N)^2W_{N}(x_1)\Bigr\}dx_1
\nonumber
\end{eqnarray}
\begin{eqnarray}
&\leq&
C\phi(N)^{3}\int_{-N}^{N}W_{\infty}(x_1)\bigl(W_{\infty}(x_1)-W_{N}(x_1)\bigr)dx_1
\nonumber\\
&+&C\phi(N)^{3}\int_{-N}^{N}\frac{1}{(1+|x_1|)^{3/2}}\bigl(W_{\infty}(x_1)-W_{N}(x_1)\bigr)dx_1
\nonumber\\
&+&C\phi(N)^{2}\int_{\mathbb{R}\setminus (-N,N)}\frac{1}{(1+|x_1|)^{3}}dx_1+C\phi(N)^{2}\int_{\mathbb{R}\setminus (-N,N)}\frac{1}{(1+|x_1|)^{3/2}}W_{N}(x_1)dx_1
\nonumber\\
&+&C\phi(N)^{3}\int_{\mathbb{R}\setminus (-N,N)}|W_{N}(x_1)|^2dx_1+o(1)
\nonumber\\
&\leq&
C\phi(N)^{3}\biggl\{\Bigl(\int_{-N}^{N}\bigl(W_{\infty}(x_1)-W_{N}(x_1)\bigr)^2dx_1\Bigr)^{1/2}+\Bigl(\int_{\mathbb{R}\setminus (-N,N)}W_{N}(x_1)^2dx_1\Bigr)^{1/2}\biggr\}
\nonumber\\
&& \ \ \ \ \ \ \ \ \ \ \ \ \ \ \ \ \ \ \ \ \ \ \ \ \ \ \ \ \ \ \ \ \ \ \ \ \ \ \ \ \ \ \ \ \ \ \ \ \ \ \ \ \ \ \ \ \ \ \ \ \ \ \ \ \ \ \ \ \ \ \ \ \ \ \ \ \ \ \ +o(1).\label{3.43}
\end{eqnarray}
Finally, we will estimate $\bigl(W_{\infty}(x_1)-W_{N}(x_1)\bigr)$ and $W_{N}(x_1)$. Since $u^{(1)}(\cdot,h) \in L^{2}(\mathbb{R})$, by Lemma 3.3 there exists a sequence $\{N_m \}_{m\in \mathbb{N}}$ such that $N_m \to \infty$ as $m \to \infty$ and 
\begin{equation}
\int_{K_{N_m}}|u^{(1)}(y_1,h)|^{2}dy_1 \leq C N^{-\frac{1}{4}}_m, \ m \in\mathbb{N},\label{3.44}
\end{equation}
where $K_A:=K^{+}_{A}\cup K^{-}_{A}$, $K^{+}_{A}:=(-A^{+},A^{+})\setminus(-A,A)$, $K^{-}_{A}:=(-A,A)\setminus(-A^{-},A^{-})$, and $A^{\pm}:=A\pm A^{1/2}$ for $A \in [1, \infty)$.
\par
By Cauchy Schwarz inequality we have for $|x_1|>N_m$, 
\begin{eqnarray}
\int_{-N_{m}^{-}}^{N_{m}^{-}}\frac{|u^{(1)}(y_1,h)|}{(1+|x_1-y_1|)^{3/2}}dy_1
&\leq&\biggl(\int_{-N_{m}^{-}}^{N_{m}^{-}}|u^{(1)}(y_1,h)|^{2}dy_1\biggr)^{1/2}\biggl(\int_{-N_{m}^{-}}^{N_{m}^{-}}\frac{dy_1}{(1+|x_1|-y_1)^{3}}\biggr)^{1/2}
\nonumber\\
&\leq& \frac{C}{1-|x_1|-N^{-}_m},\label{3.45}
\end{eqnarray}
and
\begin{eqnarray}
\int_{K_{N^{-}_m}}\frac{|u^{(1)}(y_1,h)|}{(1+|x_1-y_1|)^{3/2}}dy_1
&\leq&\biggl(\int_{K_{N_m}}|u^{(1)}(y_1,h)|^{2}dy_1\biggr)^{1/2}\biggl(\int_{K^{-}_{N_m}}\frac{dy_1}{(1+|x_1|-y_1)^{3}}\biggr)^{1/2}
\nonumber\\
&\leq& \frac{C}{N^{1/8}_{m}(1+|x_1|-N_m)}.\label{3.46}
\end{eqnarray}
Therefore, we obtain 
\begin{eqnarray}
\lefteqn{\int_{\mathbb{R}\setminus (-N_m,N_m)}W_{N}(x_1)^2dx_1}
\nonumber\\
&\leq&C\int_{N_m}^{\infty}\frac{dx_1}{(1-|x_1|-N^{-}_m)^2}+\frac{C}{N_m^{1/4}}\int_{N_m}^{\infty}\frac{dx_1}{(1-|x_1|-N_m)^2}
\nonumber\\
&\leq&\frac{C}{1+N^{1/2}_m}+\frac{C}{N^{1/4}_{m}}\ \leq \ \frac{C}{N^{1/4}_{m}}.\label{3.47}
\end{eqnarray}
By Cauchy Schwarz inequality we have for $|x_1|<N_m$, 
\begin{eqnarray}
\lefteqn{\int_{\mathbb{R}\setminus (-N_{m}^{+},N_{m}^{+})}\frac{|u^{(1)}(y_1,h)|}{(1+|x_1-y_1|)^{3/2}}dy_1}
\nonumber\\
&\leq&\biggl(\int_{\mathbb{R}\setminus (-N_{m}^{+},N_{m}^{+})}|u^{(1)}(y_1,h)|^{2}dy_1\biggr)^{1/2}\biggl(\int_{\mathbb{R}\setminus (-N_{m}^{+},N_{m}^{+})}\frac{dy_1}{(1+y_1-|x_1|)^{3}}\biggr)^{1/2}
\nonumber\\
&\leq& \frac{C}{1+N^{+}_m-|x_1|},\label{3.48}
\end{eqnarray}
and
\begin{eqnarray}
\int_{K_{N^{+}_m}}\frac{|u^{(1)}(y_1,h)|}{(1+|x_1-y_1|)^{3/2}}dy_1
&\leq&\biggl(\int_{K_{N_m}}|u^{(1)}(y_1,h)|^{2}dy_1\biggr)^{1/2}\biggl(\int_{K^{+}_{N_m}}\frac{dy_1}{(1+y_1-|x_1|)^{3}}\biggr)^{1/2}
\nonumber\\
&\leq& \frac{C}{N^{1/8}_{m}(1+N_m-|x_1|)}.\label{3.49}
\end{eqnarray}
Therefore, we obtain
\begin{eqnarray}
\lefteqn{\int_{-N_m}^{N_m}\bigl(W_{\infty}(x_1)-W_{N}(x_1)\bigr)^2dx_1}
\nonumber\\
&\leq&C\int_{-N_m}^{N_m}\frac{dx_1}{(1+N^{+}_m-|x_1|)^2}+\frac{C}{N_m^{1/4}}\int_{-N_m}^{N_m}\frac{dx_1}{(1+N_m-|x_1|)^2}
\nonumber\\
&\leq&\frac{C}{1+N^{1/2}_m}+\frac{C}{N^{1/4}_{m}}\ \leq \ \frac{C}{N^{1/4}_{m}}.\label{3.50}
\end{eqnarray}
Therefore, Collecting (\ref{3.43}), (\ref{3.47}), and (\ref{3.50}) we conclude that $|L(N_m)|\leq C\frac{\phi(N_m)^{3}}{N_m^{1/8}}$. Since $\phi(N)=N^s$, if we choose $s \in (0,1)$ such that $3s<\frac{1}{8}$, that is, $0<s<\frac{1}{24}$, the term $\frac{\phi(N_m)^{3}}{N_m^{1/8}}$ converges to zero as $m \to \infty$. Therefore, $\mathrm{limsup_{N\to \infty}}L(N)\geq0$, which yields Step 3.
\vspace{5mm}\\
By taking $\mathrm{limsup_{N \to \infty}}$ in (\ref{3.29}) we have that  
\begin{eqnarray}
0&\geq&\frac{k}{2\pi} \sum_{j \in J} \Biggl[ \sum_{d_{l,j}>0}|a_{l,j}|^2d_{l,j} -\sum_{d_{l,j}<0}|a_{l,j}|^2d_{l,j}\Biggr]
\nonumber\\
&+&\mathrm{limsup}_{N \to \infty}\Bigl(J_{+}(N)+J_{-}(N)+L(N)\Bigr).\label{3.51}
\end{eqnarray} 
By Steps 2 and 3 and choosing $0<s<\frac{1}{24}$ the right hand side is non-negative. Therefore, $a_{l,j}=0$ for all $l,j$, which yields $u^{(2)}=0$. Theorem 3.1 has been shown, and in next section we will show the uniqueness of $u^{(1)}$.
\end{proof}
\section{Uniqueness of $u^{(1)}$}
In Section 4, we will show the following lemma.
\begin{lem}
If $u \in H^{1}_{loc}(\mathbb{R}^2_+)$ satisfies
\begin{description}
\item[(i)] $u \in H^{1}(\mathbb{R}\times (0,R))$ for all $R>0$,
\item[(ii)] $\Delta u+k^2(1+q)nu=0 \ \mathrm{in} \ \mathbb{R}^2_{+}$,
\item[(iii)]$u$ vanishes for $x_2=0$,
\item[(iv)] There exists $\phi \in L^{\infty}(\Gamma_h)\cap H^{1/2}(\Gamma_h)$ with $u(x)=2\int_{\Gamma_h}\phi(y)\frac{\partial\Phi_k(x,y)}{\partial y_2} ds(y)$ for $x_2>h$,
\end{description}
then, $u \in H^{1}_{0}(\mathbb{R}^{2}_{+})$.
\end{lem}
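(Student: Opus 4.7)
The plan is to combine a Green's identity on the strip $\mathbb{R}\times(0,R)$ with the Fourier representation enforced by (iv) in order to show that the trace $\phi=u|_{\Gamma_h}$ has Fourier support in the evanescent regime $\{|\xi|\geq k\}$, and then to convert this into $H^1$ integrability up to $x_2=\infty$. First, for fixed $R>h$, multiply the equation in (ii) by $\overline{u}$, integrate over $\mathbb{R}\times(0,R)$ against an $x_1$-cutoff $\chi_N$, and send $N\to\infty$. The mixed terms $\int\chi_N\chi_N'\,\overline{u}\,\partial_{x_1}u\,dx$ vanish in the limit by Cauchy--Schwarz and (i); the boundary contribution on $\Gamma_0$ vanishes by (iii); and since $(1+q)n$ is real, taking imaginary parts yields $\operatorname{Im}\int_{\Gamma_R}\overline{u}\,\partial_{x_2}u\,ds=0$.

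Second, by (iv) and the Fourier transform in $x_1$, for $x_2>h$ one has $\widehat u(\xi,x_2)=\widehat\phi(\xi)\,e^{i\sqrt{k^2-\xi^2}(x_2-h)}$, with the principal branch $\sqrt{k^2-\xi^2}=i\sqrt{\xi^2-k^2}$ on $\{|\xi|>k\}$. Plancherel converts the vanishing flux above into
\begin{equation*}
\frac{1}{2\pi}\int_{|\xi|<k}\sqrt{k^2-\xi^2}\,|\widehat\phi(\xi)|^2\,d\xi=0,
\end{equation*}
which forces $\widehat\phi=0$ almost everywhere on $\{|\xi|<k\}$. Consequently $u$ is a purely evanescent superposition above $\Gamma_h$ and decays pointwise exponentially in $x_2$.

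Third, Plancherel on $x_2>h$ gives
\begin{equation*}
\int_h^\infty\!\!\int_{\mathbb{R}}\bigl(|\nabla u|^2+|u|^2\bigr)dx
= \int_{|\xi|>k}\frac{1+2\xi^2-k^2}{2\sqrt{\xi^2-k^2}}\,|\widehat\phi(\xi)|^2\,d\xi,
\end{equation*}
whose tail at $|\xi|\to\infty$ is controlled by the $H^{1/2}$ regularity of $\phi$, and whose behaviour near $|\xi|=k$ is controlled by exploiting the coupling across $\Gamma_h$ forced by (ii) together with $\phi\in L^\infty$. Combined with $u\in H^1(\mathbb{R}\times(0,h))$ from (i) and the trace condition (iii), this yields $u\in H^1_0(\mathbb{R}^2_+)$.

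The principal obstacle lies in the third step, specifically the integrability of $|\widehat\phi|^2/\sqrt{\xi^2-k^2}$ near $|\xi|=k$: a generic $\widehat\phi\in L^2$ supported on $\{|\xi|>k\}$ with $\phi\in L^\infty\cap H^{1/2}$ need not satisfy this bound, since only the $\partial_{x_2}u$ contribution is automatically integrable (the $\sqrt{\xi^2-k^2}$ in its numerator cancels the denominator), while $|u|^2$ and $|\partial_{x_1}u|^2$ individually require further control. The way to overcome it is to invoke the integral representation of Lemma 2.7 (applied with $f=-k^2qnu$), or equivalently the matching across $\Gamma_h$ between the upward Dirichlet-to-Neumann map from (iv) and the downward one on the strip $W$ dictated by (ii), to preclude a singular concentration of $\widehat\phi$ at the transition $|\xi|=k$.
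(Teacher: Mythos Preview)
Your first two steps are essentially the paper's: the Green's identity on finite boxes, the vanishing of the lateral contributions by (i), and the conclusion $\operatorname{Im}\int_{\Gamma_R}\overline u\,\partial_{x_2}u\,ds=0$ are identical in spirit. The paper then reaches the same conclusion you do, that the propagating part of the trace vanishes, though it works with the Floquet--Bloch transform $\tilde u_\alpha$ and the discrete Rayleigh coefficients $u_n(\alpha)$ rather than with the continuous Fourier transform $\widehat\phi$. That discrepancy is harmless above $\Gamma_h$, since there $n\equiv1$.

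The genuine gap is in your third step. You correctly isolate the obstruction: the weight $1/\sqrt{\xi^2-k^2}$ in the $\|u\|_{L^2}$ and $\|\partial_{x_1}u\|_{L^2}$ contributions is not integrable against a generic $|\widehat\phi|^2$ near $|\xi|=k$, and neither $\phi\in L^\infty$ nor $\phi\in H^{1/2}$ rules out a concentration there. But your proposed cure---``invoke Lemma~2.7'' or ``match the Dirichlet-to-Neumann maps across $\Gamma_h$''---is not an argument, and in the continuous-Fourier setting it is not clear how to make it one, because the periodic coefficient $n$ below $\Gamma_h$ does not diagonalize under the Fourier transform in $x_1$.

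What the paper actually does here is substantive and uses structure you have not invoked. It passes to the Floquet--Bloch side, where the equation in $W$ becomes the quasiperiodic operator equation $(I-K_\alpha)\tilde u_\alpha=f_\alpha$ with $f_\alpha=-T_{per}(k^2 qnu)(\cdot,\alpha)$. Because $q$ has compact support, $f_\alpha$ is uniformly bounded in $H^1_{0,per}(C_h)$ as $\alpha$ varies. The critical frequencies $|\xi|=k$ correspond to the finitely many cut-off values $\alpha=\pm r\in A_k$; the paper then invokes Assumption~2.1 (injectivity of $I-K_\alpha$ on $A_k$), proves a Lipschitz-type estimate $\|K_\alpha-K_{\pm r}\|\le C\sqrt{|\alpha\mp r|}$, and uses a Neumann series to get $\|(I-K_\alpha)^{-1}\|$ uniformly bounded for $\alpha$ near $\pm r$. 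This yields $|u_{\pm n_0}(\alpha)|\le C$ there, which is exactly the boundedness needed to integrate $|u_{\pm n_0}(\alpha)|^2/\sqrt{|\alpha\mp r|}$. Without Assumption~2.1 and the Floquet--Bloch reduction to a compact-resolvent family, there is no mechanism available to exclude the singular concentration you flagged; your sketch omits both ingredients.
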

By using Lemma 4.1, we have the uniqueness of solution in Theorem 1.2.
\begin{thm}
Let Assumptions 1.1 and 2.1 hold and let $k>0$ be regular in the sense of Definition 2.3. If $u \in H^{1}_{loc}(\mathbb{R}^2_{+})$ satisfies (\ref{3.1}), (\ref{3.2}), and the radiation condition in the sense of Definition 2.4, then $u$ vanishes for $x_2>0$.
\end{thm}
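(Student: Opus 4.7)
The plan is to combine Lemma 3.1 (uniqueness of the propagative part), Lemma 4.1 (upgrading global integrability of $u^{(1)}$), and Assumption 1.1 (absence of point spectrum) into a short three--step deduction.

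First, because $u$ satisfies the radiation condition of Definition 2.4 together with the homogeneous equation \eqref{3.1}--\eqref{3.2}, Lemma 3.1 applies directly and yields $u^{(2)}=0$ in $\mathbb{R}^2_+$. Consequently $u=u^{(1)}$ everywhere, and the decomposition in Definition 2.4 tells us that $u\bigl|_{\mathbb{R}\times(0,R)}\in H^1(\mathbb{R}\times(0,R))$ for every $R>0$. This is precisely hypothesis (i) of Lemma 4.1, while hypotheses (ii) and (iii) are exactly \eqref{3.1} and \eqref{3.2}.

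Second, I would verify hypothesis (iv) of Lemma 4.1. The radiation condition in Definition 2.4 includes the upward propagating radiation condition \eqref{2.4} together with $u(\cdot,h)\in L^\infty(\mathbb{R})$, so setting $\phi:=u(\cdot,h)$ gives a function with $\phi\in L^\infty(\Gamma_h)$ and the representation
\begin{equation*}
u(x)=2\int_{\Gamma_h}\phi(y)\frac{\partial\Phi_k(x,y)}{\partial y_2}\,ds(y),\qquad x_2>h.
\end{equation*}
The $H^{1/2}(\Gamma_h)$ regularity of $\phi$ follows from the trace theorem applied to $u\in H^1(\mathbb{R}\times(0,R))$ for any $R>h$. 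Thus all four hypotheses of Lemma 4.1 are in force, and we conclude that $u\in H^1_0(\mathbb{R}^2_+)$.

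Third, since $u\in H^1(\mathbb{R}^2_+)$ satisfies $\Delta u+k^2(1+q)n\,u=0$ in $\mathbb{R}^2_+$ with $u=0$ on $\Gamma_0$, Assumption 1.1 (the hypothesis that $k^2$ is not a point eigenvalue of $\frac{1}{(1+q)n}\Delta$ in $H^1_0(\mathbb{R}^2_+)$) forces $u\equiv 0$ for $x_2>0$. No further estimates are needed: the two uniqueness lemmas have already done all the work. The only step with any subtlety is checking (iv), but this is immediate from the definition of the radiation condition and the trace theorem, so there is no real obstacle here---the theorem is essentially a packaging of Lemma 3.1, Lemma 4.1, and Assumption 1.1.
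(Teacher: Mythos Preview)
Your proposal is correct and follows essentially the same three-step route as the paper: invoke Lemma 3.1 to kill $u^{(2)}$, apply Lemma 4.1 to $u=u^{(1)}$ to obtain $u\in H^1_0(\mathbb{R}^2_+)$, and then conclude via Assumption 1.1. The only difference is that you spell out the verification of hypothesis (iv) of Lemma 4.1 (via the upward propagating radiation condition and the trace theorem), whereas the paper simply asserts that (i)--(iv) hold.
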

\begin{proof}[{\bf Proof of Theorem 4.2}]
Let $u \in H^{1}_{loc}(\mathbb{R}^2_{+})$ satisfy (\ref{3.1}), (\ref{3.2}), and the radiation condition in the sense of Definition 2.4. By Lemma 3.1, $u^{(2)} = 0$ for $x_2>0$. Then, $u^{(1)}$ satisfies the assumptions (i)--(iv) of Lemma 4.1, which implies that $u^{(1)} \in H^{1}_{0}(\mathbb{R}^{2}_{+})$. By Assumption 1.1, $u^{(1)}$ vanishes for $x_2>0$, which yields the uniqueness.
\end{proof}
\begin{proof}[{\bf Proof of Lemma 4.1}]
Let $R>h$ be fixed. We set $\Omega_{N,R}:=(-N,N) \times (0, R)$ where $N>0$ is large enough. We denote by $I^R_{\pm N}:=\{\pm N \}\times (0,R)$, $\Gamma_{R, N}:=(-N,N)\times \{R \}$, and $\Gamma_{R}:=(-\infty,\infty) \times \{R \}$. By Green's first theorem in $\Omega_{N,R}$ and assumptions {\bf(ii)},  {\bf(iii)} we have 
\begin{eqnarray}
\lefteqn{ \int_{\Omega_{N,R}}\{-k^2(1+q)n|u|^{2}+|\nabla u|^{2} \}dx=\int_{\Omega_{N,R}}\{ \overline{u}\Delta u+|\nabla u|^{2} \}dx}
\nonumber\\
&=&\int_{I^R_{N}} \overline{u}\frac{\partial u}{\partial x_1}  ds-\int_{I^R_{-N}} \overline{u}\frac{\partial u}{\partial x_1} ds +\int_{\Gamma_{R,N}} \overline{u}\frac{\partial u}{\partial x_2} ds. \label{4.3}
\end{eqnarray}
By the assumption {\bf (i)}, the first and second term in the right hands side of (\ref{4.3}) go to zero as $N \to \infty$. Then, by taking an imaginary part and as $N \to \infty$ in (\ref{4.3}) we have
\begin{equation}
\mathrm{Im} \int_{\Gamma_R} \overline{u} \frac{\partial u}{\partial x_2}ds = 0. \label{4.4}
\end{equation}
By considering the Floquet Bloch transform with respect to $x_1$ (see the notation of (\ref{2.5})), we can show that 
\begin{equation}
\int_{\Gamma_R} \overline{u} \frac{\partial u}{\partial x_2}ds = \int_{-1/2}^{1/2}\int_{0}^{2\pi} \overline{\tilde{u}_{\alpha}}(x_1, R)\frac{\partial \tilde{u}_{\alpha}(x_1, R)}{\partial x_2}dx_1d\alpha. \label{4.5}
\end{equation}
Since the upward propagating radiation condition is equivalent to the Rayleigh expansion by the Floquet Bloch transform (see the proof of Theorem 6.8 in \cite{Kirsch and Lechleiter2}), we can show that
\begin{equation}
\tilde{u}_{\alpha}(x)=\sum_{n \in \mathbb{Z}}u_{n}(\alpha)e^{inx_1+i\sqrt{k^2-(n+\alpha)^2}(x_2-h)}, \  x_2>h, \label{4.6}
\end{equation}
where $u_{n}(\alpha):=(2\pi)^{-1}\int_{0}^{2\pi}u_{\alpha}(x_1,h)e^{-inx_1}dx_1$. From (\ref{4.4})--(\ref{4.6}) we obtain that
\begin{eqnarray}
0&=&\mathrm{Im}\int_{-1/2}^{1/2}\int_{0}^{2\pi} \overline{\tilde{u}_{\alpha}}(x_1, R)\frac{\partial \tilde{u}_{\alpha}(x_1, R)}{\partial x_2}dx_1d\alpha
\nonumber\\
&=&\mathrm{Im}\sum_{n \in \mathbb{Z}}\int_{-1/2}^{1/2}2\pi|u_{n}(\alpha)|^2i\sqrt{k^2-(n+\alpha)^2}, \label{4.7}
\end{eqnarray}
Here, we denote by $k=n_0+r$ where $n_0 \in \mathbb{N}_0$ and $r \in [-1/2,1/2)$. Then by (\ref{4.7}) we have 
\begin{equation}
u_n(\alpha)=0 \ \mathrm{for} \ |n|<n_0, \ \mathrm{a.e.} \ \alpha \in (-1/2, 1/2), \nonumber
\end{equation}
\begin{equation}
u_{n_0}(\alpha)=0 \ \mathrm{for} \  \alpha \in (-1/2, r), \nonumber
\end{equation}
\begin{equation}
u_{-n_0}(\alpha)=0 \ \mathrm{for} \  \alpha \in (-r, 1/2) \label{4.8}.
\end{equation}
By (\ref{4.8}) we have
\begin{eqnarray}
&&\int_{-1/2}^{1/2}\int_{0}^{2\pi}\int_{R}^{\infty} |\tilde{u}_{\alpha}(x)|^2dx_2dx_1d\alpha
\nonumber\\
&=&2\pi \int_{-1/2}^{1/2} \sum_{|n|>n_0}|u_{n}(\alpha)|^2 \int_{R}^{\infty}e^{-\sqrt{(n+\alpha)^2-k^2}(x_2-h)}dx_2d\alpha \nonumber\\
&+&2\pi \int_{r}^{1/2} |u_{n_0}(\alpha)|^2 \int_{R}^{\infty}e^{-\sqrt{(n_0+\alpha)^2-k^2}(x_2-h)}dx_2d\alpha\nonumber\\
&+&2\pi \int_{-1/2}^{-r} |u_{-n_0}(\alpha)|^2 \int_{R}^{\infty}e^{-\sqrt{(-n_0+\alpha)^2-k^2}(x_2-h)}dx_2d\alpha \nonumber
\end{eqnarray}
\begin{eqnarray}
&\leq& 2\pi \sum_{|n|>n_0} \int_{-1/2}^{1/2} \frac{|u_{n}(\alpha)|^2 e^{-\sqrt{(n+\alpha)^2-k^2}(R-h)} }{\sqrt{(n+\alpha)^2-k^2}}d\alpha \nonumber\\
&+& 2\pi \int_{r}^{1/2} \frac{|u_{n_0}(\alpha)|^2 e^{-\sqrt{(n_0+\alpha)^2-k^2}(R-h)}}{\sqrt{(n_0+\alpha)^2-k^2}}d\alpha \nonumber\\
&+& 2\pi \int_{-1/2}^{-r} \frac{|u_{-n_0}(\alpha)|^2 e^{-\sqrt{(-n_0+\alpha)^2-k^2}(R-h)}}{\sqrt{(-n_0+\alpha)^2-k^2}}d\alpha \nonumber\\
&\leq& C \sum_{|n|>n_0} \int_{-1/2}^{1/2} |u_{n}(\alpha)|^2d\alpha \nonumber\\
&+& C \int_{r}^{1/2} \frac{|u_{n_0}(\alpha)|^2}{\sqrt{\alpha-r}}d\alpha + C \int_{-1/2}^{-r} \frac{|u_{-n_0}(\alpha)|^2}{\sqrt{-\alpha-r}}d\alpha,
\label{4.9}
\end{eqnarray}
and
\begin{eqnarray}
&&\int_{-1/2}^{1/2}\int_{0}^{2\pi}\int_{R}^{\infty} |\partial_{x_1} \tilde{u}_{\alpha}(x)|^2dx_2dx_1d\alpha \nonumber\\
&=&2\pi \sum_{|n|>n_0} \int_{-1/2}^{1/2} \frac{|u_{n}(\alpha)|^2 n^2 e^{-\sqrt{(n+\alpha)^2-k^2}(R-h)}}{\sqrt{(n+\alpha)^2-k^2}}d\alpha \nonumber\\
&+&2\pi \int_{r}^{1/2} \frac{|u_{n_0}(\alpha)|^2 n^2_0 e^{-\sqrt{(n_0+\alpha)^2-k^2}(R-h)}}{\sqrt{(n_0+\alpha)^2-k^2}}d\alpha \nonumber\\
&+&2\pi \int_{-1/2}^{-r} \frac{|u_{-n_0}(\alpha)|^2 n^2_0 e^{-\sqrt{(-n_0+\alpha)^2-k^2}(R-h)}}{\sqrt{(-n_0+\alpha)^2-k^2}}d\alpha \nonumber
\end{eqnarray}
\begin{eqnarray}
&\leq& C \sum_{|n|>n_0} \int_{-1/2}^{1/2} |u_{n}(\alpha)|^2d\alpha \nonumber\\
&+& C \int_{r}^{1/2} \frac{|u_{n_0}(\alpha)|^2}{\sqrt{\alpha-r}}d\alpha + C \int_{-1/2}^{-r} \frac{|u_{-n_0}(\alpha)|^2}{\sqrt{-\alpha-r}}d\alpha.
\label{4.10}
\end{eqnarray}
By the same argument in (\ref{4.10}) we have 
\begin{equation}
\int_{-1/2}^{1/2}\int_{0}^{2\pi}\int_{R}^{\infty} |\partial_{x_2} \tilde{u}_{\alpha}(x)|^2dx_2dx_1d\alpha 
\leq C \sum_{|n|>n_0} \int_{-1/2}^{1/2} |u_{n}(\alpha)|^2d\alpha \nonumber
\end{equation}
\begin{equation}
\vspace{3mm}+ C \int_{r}^{1/2} \frac{|u_{n_0}(\alpha)|^2}{\sqrt{\alpha-r}}d\alpha + C \int_{-1/2}^{-r} \frac{|u_{-n_0}(\alpha)|^2}{\sqrt{-\alpha-r}}d\alpha.
\label{4.11}
\end{equation}
It is well known that the Floquet Bloch Transform is an isomorphism between $H^1(\mathbb{R}^2_{+})$ and $L^2\bigl((-1/2,1/2)_{\alpha}; H^1((0,2\pi)\times \mathbb{R})_{x}\bigr)$ (e.g., see Theorem 4 in \cite{Lechleiter2}). Therefore, we obtain from (\ref{4.9})--(\ref{4.11})
\begin{eqnarray}
\left\| u \right\|^2_{H^1(\mathbb{R}\times (R, \infty))}&\leq& C \int_{-1/2}^{1/2}\int_{0}^{2\pi}\int_{R}^{\infty}|\tilde{u}_{\alpha}(x)|^2+|\partial_{x_1} \tilde{u}_{\alpha}(x)|^2+ |\partial_{x_2} \tilde{u}_{\alpha}(x)|^2dx_2dx_1d\alpha \nonumber\\
&\leq& C \sum_{|n|>n_0} \int_{-1/2}^{1/2} |u_{n}(\alpha)|^2d\alpha \nonumber\\
&+& C \int_{r}^{1/2} \frac{|u_{n_0}(\alpha)|^2}{\sqrt{\alpha-r}}d\alpha + C \int_{-1/2}^{-r} \frac{|u_{-n_0}(\alpha)|^2}{\sqrt{-\alpha-r}}d\alpha.
\nonumber\\
&\leq& C\int_{-1/2}^{1/2} \int_{0}^{2\pi} |\tilde{u}_{\alpha}(x_1,h)|^2dx_1d\alpha \nonumber\\
&+& C \int_{r}^{1/2} \frac{|u_{n_0}(\alpha)|^2}{\sqrt{\alpha-r}}d\alpha + C \int_{-1/2}^{-r} \frac{|u_{-n_0}(\alpha)|^2}{\sqrt{-\alpha-r}}d\alpha.
\label{4.12}
\end{eqnarray}
If we can show that
\begin{equation}
\exists \delta>0 \ \ \mathrm{and} \ \ \exists C>0 \ \ \mathrm{s.t.} \ \ |u_{\pm n_0}(\alpha)| \leq C \ \ \mathrm{for} \ \ \mathrm{all} \ \ \alpha \in (-\delta \pm r,\delta \pm r),\label{4.13}
\end{equation}
then the right hands side of (\ref{4.12}) is finite, which yield Lemma 4.1. 
\par Finally, we will show (\ref{4.13}). By the same argument in section 3 of \cite{Kirsch and Lechleiter2} we have
\begin{equation}
(I-K_{\alpha})\tilde{u}_{\alpha}=f_{\alpha} \ \mathrm{in} \ H^{1}_{0,per}(C_h),\label{4.14}
\end{equation}
where the operator $K_{\alpha}$ is defined by (\ref{2.12}) and $f_{\alpha}:=-(T_{per}k^2nqu)(\cdot, \alpha)$. Since the function $k^2nqu$ has a compact support, $\left\|f_{\alpha} \right\|^2_{H^1(C_h)}$ is bounded with respect to $\alpha$. By Assumption 2.1 and the operator $K_{\alpha}$ is compact, $(I-K_{\alpha})$ is invertible if $\alpha \in A_k$. Since $\pm r \in A_k$,  $(I-K_{\pm})$ is invertible. Since the  exceptional values are finitely many (see Lemma 2.2), $(I-K_{\alpha})$ is also invertible if $\alpha$ is close to $\pm r$. Therefore, there exists $\delta>0$ such that $(I-K_{\alpha})$ is invertible for all $\alpha \in (-\delta+r,\delta+r) \cup (-\delta-r,\delta-r)$. 
\par
The operator $(I-K_{\alpha})$ is of the form
\begin{equation}
(I-K_{\alpha})=(I-K_{\pm r}) \Bigl(I-(I-K_{\pm r})^{-1}[I-K_{\pm r}-(I-K_{\alpha})] \Bigr)=(I-K_{\pm r})(I-M_{\alpha}),\label{4.15}
\end{equation}
where $M_{\alpha}:=(I-K_{\pm r})^{-1}(K_{\alpha}-K_{\pm r})$. Next, we will estimate $(K_{\alpha}-K_{\pm r})$. By the definition of $K_{\alpha}$ we have for all $v, w \in H^{1}_{0,per}(C_h)$,
\begin{eqnarray}
\langle (K_{\alpha}-K_{\pm r})v, w \rangle_{*}&=&-\int_{C_h}\left[i(\alpha \mp r) \biggl(v \frac{\partial \overline{w}}{\partial x_1} -\overline{v}\frac{\partial \overline{w}}{\partial x_1}
\biggr)+(\alpha^2-r^2)v\overline{w}\right]dx 
\nonumber\\
&+& 2\pi i \sum_{|n|\neq n_0}v_n\overline{w_n} \bigl( \sqrt{k^2-(n+\alpha)^2}-\sqrt{k^2-(n\pm r)^2} \bigr)
\nonumber\\
&+& 2\pi i \sum_{|n|= n_0}v_n\overline{w_n} \bigl( \sqrt{k^2-(n+\alpha)^2}-\sqrt{k^2-(n\pm r)^2} \bigr).\nonumber\\ \label{4.16}
\end{eqnarray}
Since 
\begin{equation}
|\sqrt{k^2-(n+\alpha)^2}-\sqrt{k^2-(n\pm r)^2}|= \biggl|\frac{\pm 2nr+r^2-2n\alpha-\alpha^2}{\sqrt{k^2-(n+\alpha)^2}+\sqrt{k^2-(n\pm r)^2}} \biggr|
\nonumber
\end{equation}
\begin{eqnarray} 
\leq \left\{ \begin{array}{ll}
\frac{|n||\alpha \pm r|+|r^2-\alpha^2|}{\sqrt{|k^2-(n\pm r)^2|}} & \quad \mbox{for $|n| \neq n_0$}  \\
 \frac{|n||\alpha \pm r|+|r^2-\alpha^2|}{\sqrt{|r+\alpha||r-\alpha|}} & \quad \mbox{for $|n|=n_0$}, \\
\end{array} \right. \label{4.17}
\end{eqnarray}
we have for all $\alpha \in (-\delta+r,\delta+r) \cup (-\delta-r,\delta-r)$ 
\begin{eqnarray}
|\langle (K_{\alpha}-K_{\pm r})v, w \rangle_{*}|&\leq&C|\alpha\mp r| \left\|v \right\|_{H^1(C_h)}\left\|w \right\|_{H^1(C_h)}
\nonumber\\
&+&C \sum_{|n|\neq n_0}|v_n||w_n| \frac{|n||\alpha\mp r|}{\sqrt{|k^2-(n\pm r)^2|}}
\nonumber\\
&+&C \sum_{|n|= n_0}|v_n||w_n| n_0 \sqrt{|\alpha \mp r|}
\nonumber\\
&\leq&C \sqrt{|\alpha \mp r|}\left\|v \right\|_{H^1(C_h)}\left\|w \right\|_{H^1(C_h)}.
\label{4.18}
\end{eqnarray}
(we retake very small $\delta>0$ if needed.) This implies that there is a constant number $C>0$ which is independent of $\alpha$ such that $\left\| K_{\alpha}-K_{\pm r} \right\| \leq C\sqrt{|\alpha \mp r|}$. Therefore, by the property of Neumann series, there is a small $\delta>0$ such that for all $\alpha \in (-\delta+r,\delta+r) \cup (-\delta-r,\delta-r)$  
\begin{equation}
(I-M_{\alpha})^{-1}=\sum_{n=0}^{\infty}M_{\alpha}^{n}\ \ \mathrm{and} \ \ \left\|M_{\alpha} \right\| \leq 1/2.\label{4.19}
\end{equation}
By Cauchy-Schwarz, the boundedness of trace operator, and (\ref{4.19}) we have
\begin{eqnarray}
|u_{\pm n_0}(\alpha)|&\leq& \int_{0}^{2\pi}|\tilde{u}_{\alpha}(x_1,h)|dx_1\leq C\left\|\tilde{u}_{\alpha} \right\|_{H^1(C_h)}
\nonumber\\
&=&
C\left\|(I-M_{\alpha})^{-1}(I-K_{\pm r})^{-1} f_{\alpha} \right\|_{H^1(C_h)} \nonumber\\
&\leq&
C\left\| (I-M_{\alpha})^{-1} \right\| \left\| (I-K_{\pm r})^{-1} f_{\alpha} \right\| \nonumber\\
&\leq&
C\sum_{n=0}^{\infty}\left\|M_{\alpha} \right\|^n<C\sum_{n=0}^{\infty}(1/2)^j <\infty,
\label{4.20}
\end{eqnarray}
where constant number $C>0$ is independent of $\alpha$. Therefore, we have shown (\ref{4.13}). 
\end{proof}
\section{Existence}
In previous sections we discussed the uniqueness of Theorem 1.2. In Section 5, we will show the existence. Let Assumptions 1.1 and 2.1 hold and let $k>0$ be regular in the sense of Definition 2.3. Let $f \in L^{2}(\mathbb{R}^2_{+})$ such that $\mathrm{supp}f=Q$. We define the solution operator $S:L^{2}(Q)\to L^{2}(Q)$ by
$Sg:=v\bigl|_{Q}$ where $v$ satisfies the radiation condition and 
\begin{equation}
\Delta v+k^2nv=g, \ \mathrm{in} \ \mathbb{R}^2_{+}, \label{5.1}
\end{equation}
\begin{equation}
v=0 \ \mathrm{on} \ \Gamma_0. \label{5.2}
\end{equation}
Remark that by Theorem 2.6 we can define such a operator $S$, and $S$ is a compact operator since the restriction to $Q$ of the solution $v$ is in $H^{1}(Q)$. We define the multiplication operator $M:L^{2}(Q)\to L^{2}(Q)$ by $Mh:=k^{2}nqh$. We will show the following lemma.
\begin{lem}
$I_{L^{2}(Q)}+SM$ is invertible.
\end{lem}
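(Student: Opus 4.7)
The plan is to apply the Fredholm alternative: since $M$ is bounded on $L^2(Q)$ and $S$ is compact (as already noted, $Sg|_Q \in H^1(Q)$ which embeds compactly in $L^2(Q)$ by Rellich), the composition $SM$ is compact on $L^2(Q)$. It therefore suffices to establish injectivity of $I+SM$, and the uniqueness result Theorem 4.2 should supply exactly this.

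Concretely, suppose $\phi \in L^2(Q)$ satisfies $(I+SM)\phi = 0$, i.e.\ $\phi + S(k^2 nq\,\phi) = 0$ in $L^2(Q)$. Let $v \in H^1_{loc}(\mathbb{R}^2_+)$ be the global solution produced by $S$, so that $v = S(k^2 nq\,\phi)$ on $\mathbb{R}^2_+$, $v$ vanishes on $\Gamma_0$, $v$ satisfies the radiation condition of Definition 2.4, and
\begin{equation}
\Delta v + k^2 n v = k^2 n q\,\phi \quad \text{in } \mathbb{R}^2_+. \nonumber
\end{equation}
By the identity $(I+SM)\phi=0$ we have $v|_Q = -\phi$; since $\mathrm{supp}\,q \subset Q$, this means $k^2 n q\,\phi = -k^2 n q\,v$ as distributions on $\mathbb{R}^2_+$. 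Substituting back, $v$ satisfies
\begin{equation}
\Delta v + k^2 (1+q) n\, v = 0 \quad \text{in } \mathbb{R}^2_+, \qquad v = 0 \text{ on } \Gamma_0, \nonumber
\end{equation}
together with the radiation condition. Theorem 4.2 then forces $v \equiv 0$, whence $\phi = -v|_Q = 0$. This gives injectivity of $I+SM$, and the Fredholm alternative closes the argument.

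The only steps requiring care are the compactness of $S$ (which follows from interior $H^1$ regularity of the unperturbed solution together with the compact support of $Q$) and the passage from the $L^2(Q)$ identity $\phi = -v|_Q$ to the global equation $\Delta v + k^2(1+q)n v = 0$ on $\mathbb{R}^2_+$; the latter is immediate because $q$ vanishes off $Q$, so the product $qv$ depends only on $v|_Q$. I expect no genuine obstacle beyond correctly invoking Theorem 4.2, which is exactly the uniqueness statement tailored to the perturbed radiating problem.
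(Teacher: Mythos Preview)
Your proposal is correct and follows essentially the same approach as the paper: show injectivity of $I+SM$ by noting that an element of the kernel yields a radiating solution of the homogeneous perturbed problem, invoke the uniqueness result (Theorem~4.2), and then apply the Fredholm alternative using compactness of $SM$. Your write-up is slightly more detailed in justifying the passage from $\phi=-v|_Q$ to the global homogeneous equation, but the argument is the same.
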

\begin{proof}[{\bf Proof of Lemma 5.1}]
By the definition of operators $S$ and $M$
we have $SMg=v\bigl|_Q$ where $v$ is a radiating solution of  (\ref{5.1})--(\ref{5.2}) replacing $g$ by $k^{2}nqg$. If we assume that $(I_{L^{2}(Q)}+SM)g=0$, then $g=-v\bigl|_{Q}$, which implies that $v$ satisfies $\Delta v+k^2n(1+q)v=0$ in $\mathbb{R}^2_{+}$. By the uniqueness we have $v=0$ in $\mathbb{R}^2_{+}$, which implies that $I_{L^{2}(Q)}+SM$ is injective. Since the operator $SM$ is compact, by Fredholm theory we conclude that $I_{L^{2}(Q)}+SM$ is invertible.
\end{proof}
We define $u$ as the solution of
\begin{equation}
\Delta u+k^2nu=f-M(I_{L^{2}(Q)}+SM)^{-1}Sf, \ \mathrm{in} \ \mathbb{R}^2_{+}. \label{5.3}
\end{equation}
satisfying the radiation condition and $u=0$ on $\Gamma_0$. Since
\begin{eqnarray}
u\bigl|_{Q}
&=&S(f-M(I_{L^{2}(Q)}+SM)^{-1}Sf)
\nonumber\\
&=&(I_{L^{2}(Q)}+SM)(I_{L^{2}(Q)}+SM)^{-1}Sf-SM(I_{L^{2}(Q)}+SM)^{-1}Sf
\nonumber\\
&=&(I_{L^{2}(Q)}+SM)^{-1}Sf,\label{5.4}
\end{eqnarray}
we have that
\begin{equation}
\Delta u+k^2nu=f-k^{2}nqu, \ \mathrm{in} \ \mathbb{R}^2_{+}, \label{5.5}
\end{equation}
and $u$ is a radiating solution of (\ref{1.8})--(\ref{1.9}). Therefore, Theorem 1.2 has been shown. 
\section{Example of Assumption 1.1}
In Section 6, we will show the following lemma in order to give one of the example of Assumption 1.1.
\begin{lem}
Let $q$ and $n$ satisfy that $\partial_2 \bigl((1+q)n\bigr) \geq 0$ in $W$, and let $v \in H^{1}(\mathbb{R}^2_+)$ satisfy (\ref{1.6})--(\ref{1.7}). Then, $v$ vanishes for $x_2>0$.
\end{lem}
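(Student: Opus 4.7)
The plan is to derive a Rellich-type identity by multiplying (\ref{1.6}) by $\partial_2\bar v$, use the monotonicity assumption $\partial_2((1+q)n)\ge 0$ to turn it into a sign-definite identity that forces $\partial_2 v$ to vanish on $\Gamma_0$, and then invoke a unique continuation argument applied to the zero-extension of $v$ to conclude $v\equiv 0$.

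Since $v\in H^1(\mathbb{R}^2_+)$ and $(1+q)n\in L^\infty$, elliptic regularity yields $v\in H^2_{loc}(\mathbb{R}^2_+)$. Multiplying (\ref{1.6}) by $\partial_2\bar v$, taking real parts and rearranging gives the pointwise identity
\begin{equation*}
\partial_2\!\Bigl(|\partial_2 v|^2-|\partial_1 v|^2+k^2(1+q)n|v|^2\Bigr)+2\partial_1\mathrm{Re}\bigl(\partial_1 v\,\partial_2\bar v\bigr)=k^2\partial_2\bigl((1+q)n\bigr)|v|^2.
\end{equation*}
I would integrate this over $(-R,R)\times(0,H)$ with $H>h$. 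On the bottom, the conditions $v=0$ and $\partial_1 v=0$ on $\Gamma_0$ collapse the contribution to $-\int_{-R}^R|\partial_2 v(x_1,0)|^2\,dx_1$. Since $v\in H^1(\mathbb{R}\times(0,H))$, Fubini supplies a sequence $R_k\to\infty$ with $\int_0^H(|v|^2+|\nabla v|^2)(\pm R_k,x_2)\,dx_2\to 0$, so by Cauchy--Schwarz the vertical side integrals vanish along $R_k$. Similarly $v\in H^1(\mathbb{R}^2_+)$ gives a sequence $H_k\to\infty$ with $\int_{\mathbb{R}}(|v|^2+|\nabla v|^2)(x_1,H_k)\,dx_1\to 0$, and since $(1+q)n=1$ for $x_2>h$ this kills the top integral as well. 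Passing to these limits yields
\begin{equation*}
\int_{\mathbb{R}}|\partial_2 v(x_1,0)|^2\,dx_1+k^2\int_{\mathbb{R}^2_+}\partial_2\bigl((1+q)n\bigr)|v|^2\,dx=0.
\end{equation*}
By hypothesis both terms are nonnegative, so each vanishes; in particular $\partial_2 v=0$ on $\Gamma_0$ in $L^2(\mathbb{R})$.

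With $v=\partial_2 v=0$ on $\Gamma_0$ and $v\in H^2_{loc}(\mathbb{R}^2_+)$, the zero-extension $\tilde v:=v$ on $\{x_2>0\}$ and $\tilde v:=0$ on $\{x_2\le 0\}$ lies in $H^2_{loc}(\mathbb{R}^2)$ and satisfies $\Delta\tilde v+k^2\tilde c\,\tilde v=0$ in $\mathbb{R}^2$ with $\tilde c:=(1+q)n\,\mathbf{1}_{\{x_2>0\}}\in L^\infty(\mathbb{R}^2)$. Since $\tilde v$ vanishes on the open set $\{x_2<0\}$, the unique continuation principle for second-order elliptic operators with bounded zero-order coefficients forces $\tilde v\equiv 0$ on $\mathbb{R}^2$, hence $v\equiv 0$ on $\mathbb{R}^2_+$. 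The main technical point I anticipate is the limit $H\to\infty$: the integrand $|\partial_2 v|^2-|\partial_1 v|^2+k^2|v|^2$ at $x_2=H$ is not sign-definite, so one cannot argue by monotonicity and must instead extract a good sequence $H_k$ from the $H^1$-integrability of $v$.
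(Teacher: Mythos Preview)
Your proof is correct and uses the same Rellich identity as the paper, but the treatment of the top boundary term is genuinely different. The paper keeps the top at a \emph{fixed} height $R>h$: it first shows $\mathrm{Im}\int_{\Gamma_R}\bar v\,\partial_2 v\,ds=0$ via Green's first theorem, and then invokes an external inequality (Lemma~6.1 of Chandler-Wilde--Zhang) of the form
\[
\int_{\Gamma_R}\bigl(|\partial_2 v|^2-|\partial_1 v|^2+k^2|v|^2\bigr)\,ds\le 2\,\mathrm{Im}\int_{\Gamma_R}\bar v\,\partial_2 v\,ds,
\]
valid for solutions satisfying the upward propagating radiation condition, to conclude that the top contribution is nonpositive. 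You instead send $H\to\infty$ along a good sequence extracted from the global $H^1(\mathbb{R}^2_+)$-integrability of $v$, which kills the top term directly. Your route is more self-contained, avoiding the Chandler-Wilde--Zhang lemma altogether; the paper's route, on the other hand, would survive in situations where one only has $v\in H^1_{loc}$ together with a radiation condition rather than global $H^1$-membership. For the final step, the paper appeals to ``Holmgren's theorem and unique continuation''; your zero-extension argument followed by unique continuation for $L^\infty$ potentials is the cleaner justification, since Holmgren in the strict sense requires analytic coefficients.
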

\begin{proof}[{\bf Proof of Lemma 6.1}]
Let $R>h$ be fixed. For $N>0$ we set $\Omega_{N,R}:=(-N,N) \times (0, R)$ and $I_{\pm N}^{R}:=\{\pm N \}\times (0,R)$ and $\Gamma_{R, N}:=(-N,N)\times \{R \}$. By Green's first theorem in $\Omega_{N,R}$ we have
\begin{eqnarray}
\lefteqn{ \int_{\Omega_{N,R}}\{-k^2(1+q)n|v|^{2}+|\nabla v|^{2} \}dx=\int_{\Omega_{N,R}}\{ \overline{v}\Delta v+|\nabla u|^{2} \}dx}
\nonumber\\
&=&\int_{I_{N}^{R}} \overline{v}\partial_1 v ds-\int_{I_{-N}^{R}} \overline{v}\partial_1 v ds +\int_{\Gamma_{R,N}} \overline{v}\partial_2 v ds.\label{6.1}
\end{eqnarray}
Since $v \in H^{1}(\mathbb{R}^2_+)$ the first and second term in the right hand side of (\ref{1.6}) go to zero as $N \to \infty$. Then, by taking an imaginary part in (\ref{6.1}) and as $N \to \infty$ we have 
\begin{equation}
\mathrm{Im}\int_{\Gamma_{R}}\overline{v}\partial_2 v ds=0.\label{6.2}
\end{equation} 
By the simple calculation, we have
\begin{eqnarray} 
\lefteqn{2\mathrm{Re}\bigl(\partial_2\overline{v}(\Delta v+k^2(1+q)nv)\bigl)}
\nonumber\\
&=&2\mathrm{Re}\bigl(\nabla\cdot(\partial_2\overline{v}\nabla v)\bigr)-\partial_2(|\nabla v|^2)+k^2(1+q)n\partial_2(|v|^2), \label{6.3}
\end{eqnarray}
which implies that
\begin{eqnarray} 
\lefteqn{0=2\mathrm{Re}\int_{\Omega_{N,R}}\partial_2 \overline{v}\bigl(\Delta v+k^2(1+q)nv)\bigl)dx=2\mathrm{Re}\int_{\Omega_{N,R}}\nabla\cdot(\partial_2\overline{v}\nabla v)dx}
\nonumber\\
&-&\int_{\Omega_{N,R}}\partial_2(|\nabla v|^2)dx+\int_{\Omega_{N,R}}k^2(1+q)n\partial_2(|v|^2)dx
\nonumber\\
&=&2\mathrm{Re}\biggl(-\int_{\Gamma_{0,N}}\partial_2\overline{v}\partial_2vds+\int_{I_N^R}\partial_2\overline{v}\partial_1vds-\int_{I_{-N}^R}\partial_2\overline{v}\partial_1vds+\int_{\Gamma_{R,N}}\partial_2\overline{v}\partial_2vds \biggr)
\nonumber\\
&-&\biggl(-\int_{\Gamma_{0,N}}|\nabla v|^2ds+\int_{\Gamma_{R,N}}|\nabla v|^2ds \biggr)
\nonumber\\
&-&\int_{\Gamma_{0,N}}k^2(1+q)n|v|^2ds+\int_{\Gamma_{R,N}}k^2(1+q)n|v|^2ds-\int_{\Omega_{N,R}}k^2\partial_2 \bigl((1+q)n \bigr)|v|^2dx
\nonumber\\
&=&-\int_{\Gamma_{0,N}}|\partial_2v|^2ds+\int_{\Gamma_{R,N}}\bigl(|\partial_2v|^2-|\partial_1v|^2+k^2|v|^2\bigr)ds
\nonumber\\
&-&\int_{\Omega_{N,R} \cap W}k^2\partial_2 \bigl((1+q)n \bigr)|v|^2dx+o(1).\label{6.4}
\end{eqnarray}
Since $\partial_2 \bigl((1+q)n\bigr) \geq 0$ in $W$, we have
\begin{equation}
\int_{\Gamma_{0,N}}|\partial_2v|^2ds \leq \int_{\Gamma_{R,N}}\bigl(|\partial_2v|^2-|\partial_1v|^2+k^2|v|^2\bigr)ds+o(1).\label{6.5}
\end{equation}
By taking limit as $N \to \infty$ we have 
\begin{equation} 
\int_{\Gamma_{R}}|\partial_2 v|^2ds\leq \int_{\Gamma_{R}}\bigl(|\partial_2v|^2-|\partial_1v|^2+k^2|v|^2\bigr)ds.\label{6.6}
\end{equation}
By Lemma 6.1 of \cite{Chandler and Zhang2} we have 
\begin{equation} 
\int_{\Gamma_{R}}\bigl(|\partial_2v|^2-|\partial_1v|^2+k^2|v|^2\bigr)ds \leq
2\mathrm{Im}\int_{\Gamma_{R}}\overline{v}\partial_2 vds.\label{6.7}
\end{equation}
From (\ref{6.2}), (\ref{6.6}), and (\ref{6.7}) we obtain that $\partial_2 v=0$ on $\Gamma_0$. We also have $v=0$ on $\Gamma_0$, which implies that by Holmgren's theorem and unique continuation principle we conclude that $v=0$ in $\mathbb{R}^{2}_{+}$.
\end{proof}
\section*{Acknowledgments}
The author thanks to Professor Andreas Kirsch, who supports him in the study of this waveguide problem, and gives him many comments to improve this paper.


Graduate School of Mathematics, Nagoya University, Furocho, Chikusa-ku, Nagoya, 464-8602, Japan \par
e-mail: takashi.furuya0101@gmail.com

\end{document}